\setlist[1]{itemsep=0mm} 
\newtheoremstyle{bthm}{\baselineskip}{\baselineskip}{\slshape}{}{\bfseries}{}{ }{}
\newtheoremstyle{bex}{\baselineskip}{\baselineskip}{}{}{\sffamily}{:}{\newline }{}
\theoremstyle{bthm}
\newtheorem{thm}{Theorem}[section]
\newtheorem{cor}[thm]{Corollary}
\newtheorem{lem}[thm]{Lemma}
\newtheorem{prob}[thm]{Problem}
\theoremstyle{bex}
\begin{document}
\begin{titlepage}
\title{Bounds for the b-chromatic number of powers of hypercubes}
\author{P. Francis$^{1}$, S. Francis Raj$^{2}$, M. Gokulnath$^3$}
\date{{\footnotesize$^{1}$Department of Computer Science, Indian Institute of Technology, Palakkad-678557, India.\\
$^{2,3}$Department of Mathematics, Pondicherry University, Puducherry-605014, India.}\\
{\footnotesize$^{1}$: pfrancis@iitpkd.ac.in\, $^{2}$: francisraj\_s@yahoo.com\, $^3$: gokulnath.math@gmail.com }}
\maketitle
\renewcommand{\baselinestretch}{1.3}\normalsize
\begin{abstract}
The b-chromatic number $b(G)$ of a graph $G$ is the maximum $k$ for which $G$ has a proper vertex coloring using $k$ colors such that each color class contains at least one vertex adjacent to a vertex of every other color class.
In this paper, we mainly investigate on one of the open problems given in \cite{francis2017b}.
As a consequence, we have obtained an upper bound for the b-chromatic number of some powers of hypercubes.
This turns out to be an improvement of the already existing bound in \cite{francis2017b}.  
Further, we have determined a lower bound for the b-chromatic number of some powers of the Hamming graph, a generalization of the hypercube.
\end{abstract}

\noindent
\textbf{Key Words:} b-coloring, b-chromatic number, Hypercube, Hamming graph, powers of graphs. \\
\textbf{2000 AMS Subject Classification:} 05C15
\section{Introduction}\label{intro}

All graphs considered in this paper are simple, finite and undirected.
Let $G$ be a graph with vertex set $V(G)$ and edge set $E(G)$.
A b-coloring of a graph $G$ using $k$ colors is a proper coloring of the vertices of $G$ using $k$ colors in which each color class has a color dominating vertex, that is, a vertex which has a neighbor in each of the other color classes.
The b-chromatic number, $b(G)$ of $G$ is the largest $k$ such that $G$ has a b-coloring using $k$ colors.
The concept of b-coloring was introduced by Irving and Manlove \cite{irving1999b} in analogy to the achromatic number of a graph $G$ (the maximum number of color classes in a complete coloring of $G$).
Since the time of its introduction, the concept of b-coloring has been extensively studied.  
Some of the references are \cite{balakrishnan2013bounds,barth2007b,blidia2012b,ibiapina2020b,maffray2013b,faik2004b,kouider2002some,kratochvil2002b}. Also recently, Jakovac and Peterin \cite{jakovac2018b} have published a survey article on b-coloring of graphs.

It is clear from the definition of $b(G)$ that the chromatic number, $\chi(G)$ of $G$ is the least $k$ for which $G$ admits a b-coloring using $k$ colors and hence $\chi(G) \leq b(G)\leq \Delta(G)+1$, where $\Delta(G)$ is the maximum degree of $G$.


For $q\geq2$, let $\mathbb{Z}_q=\{0,1,\ldots,q-1\}$ be the additive group of integer modulo $q$ and $\mathbb{Z}_q^n\cong \mathbb{Z}_q\times \mathbb{Z}_q\times\cdots\times \mathbb{Z}_q$ ($n$ times). 
The Hamming graph, $H_{n,q}$, has the vertex set $\mathbb{Z}_q^n$ and two $q$-ary $n$-tuples are adjacent if and only if they differ in exactly one coordinate.
The special case $H_{n,2}$ is the well known hypercube of dimension $n$ (generally denoted by $Q_n$).  
The hamming graphs have been investigated in the past.
See for instance \cite{ostergard1997coloring,kim2000coloring,jamison2008acyclic,guo2018b,harney2016robust}. 
The $p^{th}$ power of a graph $G$ denoted by $G^p$ is a graph whose vertex set $V(G^p)=V(G)$ and edge set $E(G^p)=\{xy : d_G(x,y)\leq p\}$, where $d_G(x,y)$ denotes the distance between $x$ and $y$ in $G$.

\begin{figure}[t]
	\centering
	\includegraphics[width=.5500\textwidth]{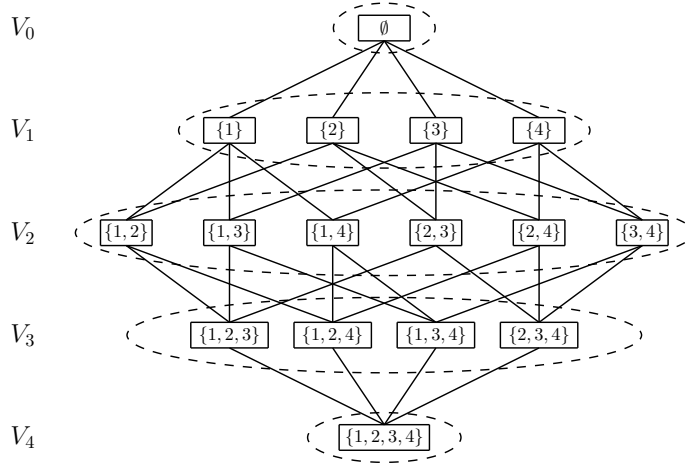}
	\caption{$Q_4$ where the boxes represent the vertices which are inside}
	\label{q4}
\end{figure}

For $n\in\mathbb{N}$, let $[n]=\{1,2,\ldots,n\}$ and  $2^{[n]}=\{x:x\subseteq[n]\}$, the power set of $[n]$.
Let us define the hypercube $Q_n$ in a slightly different way. 
The hypercube $Q_n$ is a graph with $V(Q_n)=2^{[n]}$ and for $x,y\in V(Q_n)$, $xy\in E(Q_n)$ if and only if $|x\triangle y|=1$. 
Throughout this paper, let us assume that the entries in $V(Q_n)=2^{[n]}$ are arranged in the ascending order. 
Let us define the simplicial ordering of the vertices of $Q_n$.
For two distinct vertices $x,y\in V(Q_n)$, we say that $x$ precedes $y$, denoted by $x < y$, if either $|x| < |y|$ or $|x| = |y|$ and $\min\{x\triangle y\} \in x$. The vertices in the hypercube are always taken in the simplicial ordering. 
For $0\leq i\leq n$, let $V_i=\{x\in2^{[n]}:|x|=i\}$.  
For better understanding, we have given $Q_4$ and $Q_5$ in Figure \ref{q4} and Figure \ref{q5} respectively. 
Let $\mathcal{I}_m$ denote the first $m$ elements of $2^{[n]}$ in the simplicial ordering.
Sometimes, we refer to $\mathcal{I}_m$ as an initial segment of size $m$ in the hypercube.
Also, let us define the simplicial ordering of sets consisting of vertices of $Q_n$ in the following way.
For two different sets $A,B\subseteq V(Q_n)$, we say that $A$ precedes $B$, denoted by $A < B$, if $|A| < |B|$ or $|A| = |B|$ and $\min\{A\triangle B\} \in A$, where $\min\{A\triangle B\}$ is the first vertex in the simplicial ordering of $A\triangle B$.
For $A\subseteq 2^{[n]}$ and $p\in[n]$, let us define $C^p[A]=\{y\in2^{[n]}:|x\triangle y|\leq p$ for every $x\in A\}$ and let $C^p(A)=C^p[A]\backslash A$.

Let us recall some of the definitions due to Tsukerman \cite{tsukerman2013isoperimetric} which are required for this paper.
For $A\subseteq2^{[n]}$ and $i\in[n]$, the $i$-sections of $A$ are given by $A_{i-}=\{x\in2^{[n]\backslash\{i\}}:x\in A\}$ and $A_{i+}=\{x\in2^{[n]\backslash\{i\}}:x\cup\{i\}\in A\}$.
Clearly, $A=A_{i-}\cup (A_{i+}+\{i\})$ where $A+\{i\}=\{x\cup\{i\}:x\in A\}$.
Let us define $i$-compression  of $A$, denoted by $S_i(A)$, as follows:
$S_i(A)_{i-}=\mathcal{I}_{|A_{i-}|}\subseteq 2^{[n]\backslash\{i\}}$, $S_i(A)_{i+}=\mathcal{I}_{|A_{i+}|}\subseteq 2^{[n]\backslash\{i\}}$ and
$S_i(A)=S_i(A)_{i-}\cup (S_i(A)_{i+}+\{i\})=\mathcal{I}_{|A_{i-}|}\cup (\mathcal{I}_{|A_{i+}|}+\{i\})$. Thus either $S_i(A)=A$ or $S_i(A)<A$ in the simplicial ordering of sets.  We say that $A$ is $i$-compressed if and only if $A=S_i(A)$.

Notations and terminologies not mentioned here are as in \cite{west2005introduction}.
\section{Open Problem}
Let us recall the clique number and bounds for the b-chromatic number of powers of hypercubes given in \cite{francis2017b}.
\begin{figure}[t]
	\centering
	\includegraphics[angle=270,origin=c,
	width=1.00\textwidth]{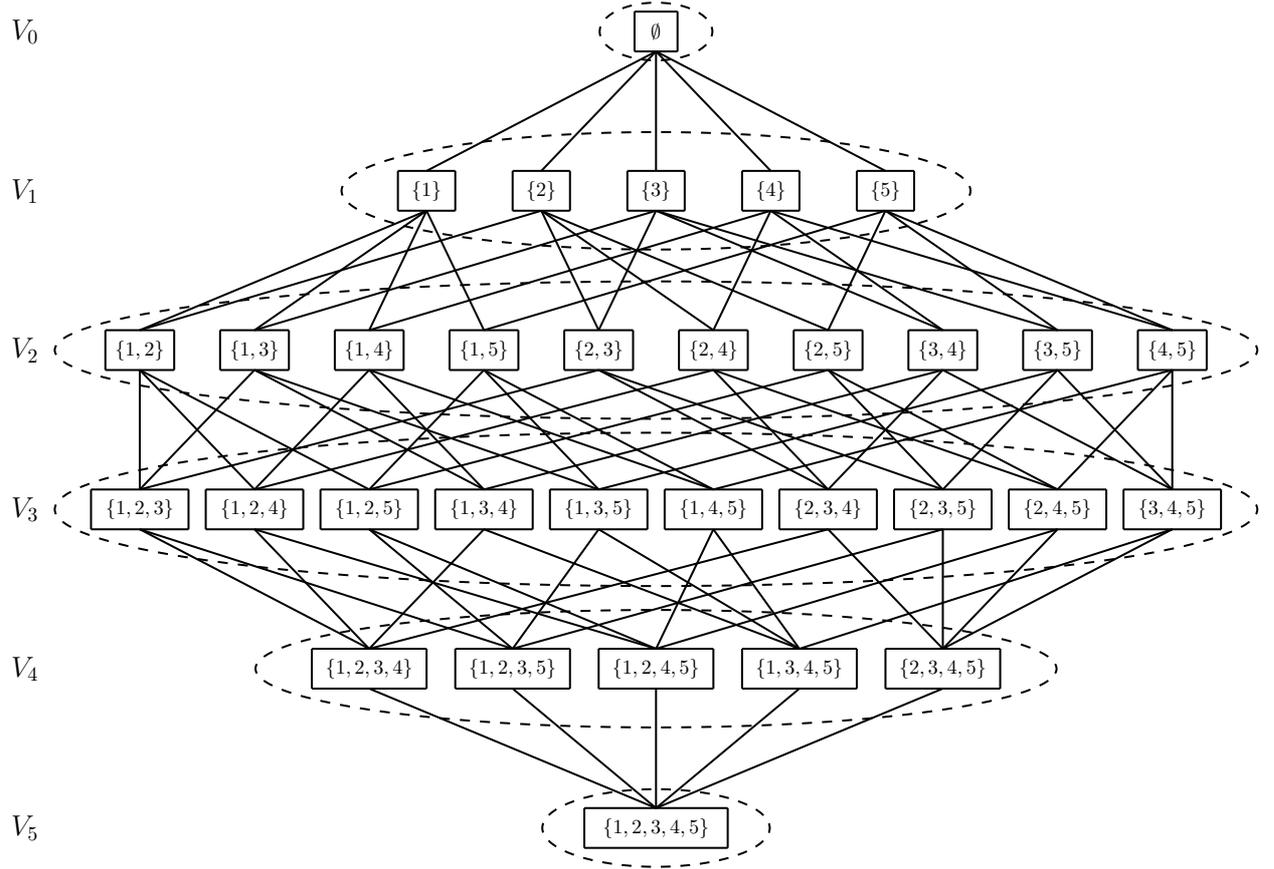}
	\vskip-1cm
	\caption{$Q_5$ where the boxes represent the vertices which are inside}
	\label{q5}
\end{figure}
\begin{thm}[\cite{francis2017b}]\label{hypercubeboundold}
(i) For $n\geq3$ and $1\leq p\leq n-1$, the clique size of $Q_n^p$ is

$\omega(Q_n^p)=\left\{\begin{array}{ll}
~\mathop{\sum}\limits_{i=0}^{\frac{p}{2}}\binom{n}{i}		& \mathrm{if}~p~ \mathrm{is~ even} \\
2 \mathop{\sum}\limits_{i=0}^{\frac{p-1}{2}}\binom{n-1}{i}	& \mathrm{if}~ p~ \mathrm{is~ odd}.
\end{array}
\right. $

\noindent (ii) For all $n\geq2$ and $\left\lfloor\frac{n}{2}\right\rfloor < p< n-1$, the b-chromatic number of $Q_n^p$ is

$2^{n-1}\leq b(Q_n^p)\leq2^{n-1}+\left\lfloor\frac{\omega(Q_n^p)}{2}\right\rfloor $.
\end{thm}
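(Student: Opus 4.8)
The plan is to treat the two parts of Theorem~\ref{hypercubeboundold} separately. Part~(i) is at heart a statement about \emph{anticodes} in the cube: a clique of $Q_n^p$ is precisely a family of members of $2^{[n]}$ that pairwise satisfy $|x\triangle y|\le p$, i.e.\ a set of diameter at most $p$ in the Hamming metric. Part~(ii) will then be deduced from~(i): a short double-counting argument gives the upper bound, and an explicit coloring by antipodal pairs gives the lower bound.

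For part~(i) I would first exhibit cliques of the stated sizes. When $p=2r$ is even, the Hamming ball $B=\{x\in 2^{[n]}:|x|\le r\}$ satisfies $|x\triangle y|\le|x|+|y|\le 2r=p$ for all $x,y\in B$, so $B$ is a clique, and $|B|=\sum_{i=0}^{r}\binom{n}{i}$. When $p=2r+1$ is odd, the family $D=\{x\in 2^{[n]}:|x\setminus\{1\}|\le r\}$ satisfies, for all $x,y\in D$, $|x\triangle y|\le 1+|x\setminus\{1\}|+|y\setminus\{1\}|\le 2r+1=p$ (splitting $x\triangle y$ according to whether it contains the coordinate $1$), so $D$ is a clique, and $|D|=2\sum_{i=0}^{r}\binom{n-1}{i}$. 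For the matching upper bounds I would invoke Kleitman's diameter theorem; alternatively, staying within the framework already set up in the paper, one reproves it by repeatedly applying the $i$-compressions $S_1,\dots,S_n$ of Tsukerman to a maximum clique $A$. Each $S_i$ preserves $|A|$ and, as one checks, cannot increase the diameter, so it suffices to bound $|A|$ under the assumption that $A$ is $i$-compressed for every $i\in[n]$, and such a fully compressed family of diameter at most $p$ is then shown to have size at most $|B|$ (if $p$ is even) or $|D|$ (if $p$ is odd). The verification that $i$-compression cannot increase the diameter is the one genuinely technical point of the theorem.

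For the lower bound in part~(ii), I would color $V(Q_n^p)=2^{[n]}$ by antipodal pairs, giving each pair $\{x,[n]\setminus x\}$ its own color, for a total of $2^{n-1}$ colors. Since $p<n$, the two vertices of a pair lie at distance $n>p$, so the coloring is proper. For any vertex $a$ and any pair $\{y,[n]\setminus y\}$ other than the one containing $a$, the distances $|a\triangle y|$ and $|a\triangle([n]\setminus y)|$ add up to $n$, so at least one of them is at most $\lfloor n/2\rfloor<p$; hence $a$ is adjacent in $Q_n^p$ to a vertex of that class. Thus every vertex is a color dominating vertex, the coloring is a b-coloring with $2^{n-1}$ colors, and $b(Q_n^p)\ge 2^{n-1}$. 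This is the step where the hypothesis $\lfloor n/2\rfloor<p$ is used; the remainder needs only $p<n$.

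For the upper bound in part~(ii), consider any b-coloring of $Q_n^p$ with color classes $C_1,\dots,C_k$ and let $s$ be the number of indices $i$ with $|C_i|=1$. Counting vertices gives $2^n=\sum_i|C_i|\ge s+2(k-s)=2k-s$, so $s\ge 2k-2^n$. On the other hand, a vertex forming a singleton class is trivially its own color dominating vertex, hence is adjacent to the unique vertex of every other singleton class; so the $s$ singleton vertices form a clique of $Q_n^p$, whence $s\le\omega(Q_n^p)$, the quantity computed in part~(i). Combining the two inequalities yields $2k-2^n\le\omega(Q_n^p)$, so $k\le 2^{n-1}+\omega(Q_n^p)/2$, and since $k$ is an integer, $k\le 2^{n-1}+\lfloor\omega(Q_n^p)/2\rfloor$. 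Taking $k=b(Q_n^p)$ completes the proof.
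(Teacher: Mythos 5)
The paper does not actually prove Theorem~\ref{hypercubeboundold}: it is quoted from \cite{francis2017b} without proof, so there is no in-paper argument to compare against. Your reconstruction is correct. For part~(i) you rightly observe that a clique of $Q_n^p$ is exactly a family of diameter at most $p$ in the Hamming metric, your two extremal families (the radius-$\frac{p}{2}$ ball for even $p$, and the ``centred pair'' family $\{x:|x\setminus\{1\}|\le\frac{p-1}{2}\}$ for odd $p$) have the stated sizes and diameters, and the matching upper bound is precisely Kleitman's diameter theorem, which is a legitimate citation. The one soft spot is your alternative compression route: the assertion that an $i$-compression ``cannot increase the diameter'' is not immediate and is really the whole content of that approach; it can be justified from the paper's own machinery (diameter $\le p$ is equivalent to $A\subseteq C^p[A]$, so Lemma~\ref{simplicialcommonngbr} and Theorem~\ref{close} show the compressed sections and their cross-distances stay within the required bounds), but as written it is only a sketch, so you should lean on the Kleitman citation. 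For part~(ii) your arguments are complete and, pleasingly, coincide with the techniques the paper itself uses later: the antipodal-pair coloring is the $q=2$ case of the coset coloring in Theorem~\ref{Hnqp} (and the hypothesis $\lfloor n/2\rfloor<p$ enters exactly where you say), while the singleton-counting bound $2k-2^n\le s\le\omega(Q_n^p)$ is the same double count as Lemma~\ref{limp2l}, with the singleton classes forming a clique because each must be its own color dominating vertex.
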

 In \cite{francis2017b}, the authors have also obtained the maximum number of common neighbors for a clique of size $2$ in $Q_n^p$.
Also, for a clique of larger size, they expected that the number of common neighbors will be maximum if the vertices of the clique in $Q_n^p$ are chosen as an initial segment in the simplicial order. They mentioned this as an open problem.
\begin{prob}[\cite{francis2017b}]\label{problem1}
Let $\mathcal{F}\subseteq 2^{[n]}$. 
Suppose $|\mathcal{F}|=m$, for some $2\leq m\leq2^{n-1}$, then for what kind of $\mathcal{F}$ will the $|C^p[\mathcal{F}]|$ be maximum?
\end{prob}
In Section \ref{problemsolved}, we have answered Problem \ref{problem1} and as expected, we have shown that $|C^p[\mathcal{F}]|$ is maximum when $\mathcal{F}$ is chosen as an initial segment in the simplicial ordering.
As a consequence, for $\left\lfloor\frac{n}{2}\right\rfloor < p< n-1$, we have obtained an upper bound, better than the existing bound given in Theorem \ref{hypercubeboundold}, for the b-chromatic number of the powers of hypercubes.
This can be seen in Section \ref{bcolorhypercube}.
In addition, we have investigated the b-chromatic number of some powers of Hamming graph, a generalization of the hypercubes.
\section{An affirmative answer to Problem \ref{problem1}}\label{problemsolved}
Let us start Section \ref{problemsolved} by showing that the set of all common neighbors of an initial segment of a power of hypercube is again an initial segment of the power of hypercube.
\begin{lem}\label{simplicialcommonngbr}
For any $n,p\in\mathbb{N}$, $a\in[2^n]$, there exists an integer $b\in[2^n]	$ such that $C^p[\mathcal{I}_a]=\mathcal{I}_b$.
\end{lem}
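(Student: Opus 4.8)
The plan is to show that $C^p[\mathcal{I}_a]$ is closed downward under the simplicial order on $2^{[n]}$, i.e.\ that $y\in C^p[\mathcal{I}_a]$ and $x<y$ together force $x\in C^p[\mathcal{I}_a]$; since a downward closed subset of a total order is exactly an initial segment, this gives $C^p[\mathcal{I}_a]=\mathcal{I}_b$ with $b=|C^p[\mathcal{I}_a]|$ (reading $\mathcal{I}_0=\emptyset$). Writing $B^p(u)=\{w\in 2^{[n]}:|u\triangle w|\le p\}$, the definition of $C^p[\cdot]$ says $y\in C^p[\mathcal{I}_a]\iff\mathcal{I}_a\subseteq B^p(y)$; and if $p\ge n$ then $B^p(u)=2^{[n]}$ for all $u$, so $C^p[\mathcal{I}_a]=2^{[n]}=\mathcal{I}_{2^n}$ and we are done. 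So assume $1\le p\le n-1$, and it remains to show: if $x<y$ and $\mathcal{I}_a\subseteq B^p(y)$, then $\mathcal{I}_a\subseteq B^p(x)$.

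This is deduced from the following exchange claim. \emph{Claim.} If $x<y$ in the simplicial order, then for every $v\in 2^{[n]}$ there is a $w\in 2^{[n]}$ with $w\le v$ and $|w\triangle y|\ge|v\triangle x|$. Granting the Claim: for each $v\in\mathcal{I}_a$ choose such a $w$; since $\mathcal{I}_a$ is an initial segment and $v\in\mathcal{I}_a$ we have $w\in\mathcal{I}_a$, hence $|v\triangle x|\le|w\triangle y|\le p$ (the last step because $\mathcal{I}_a\subseteq B^p(y)$), so $v\in B^p(x)$; as $v$ was arbitrary, $\mathcal{I}_a\subseteq B^p(x)$.

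To prove the Claim, set $D=x\triangle y$, $Y=y\setminus x$, $Z=x\setminus y$, so $D=Y\sqcup Z$ and $|Y|-|Z|=|y|-|x|\ge0$ (since $x<y$ forces $|x|\le|y|$). The point is that $v\triangle x$ and $v\triangle y$ agree off $D$, while on $D$ they partition $D$ (each $i\in D$ lies in exactly one of them). So if we let $w$ agree with $v$ off $D$ and put $w\cap D=S$ for a chosen $S\subseteq D$, then $|w\triangle y|=c+|Y\triangle S|$ where $c=|(v\triangle x)\setminus D|=|(v\triangle y)\setminus D|$, whereas $|v\triangle x|=c+|v\cap Y|+|Z|-|v\cap Z|$. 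Taking $S\subseteq Z$ gives $|Y\triangle S|=|Y|+|S|$, so the distance requirement holds as soon as $|S|\ge|v\cap Y|-|v\cap Z|+|Z|-|Y|$; one checks the right side is at most $|Z|$, so a suitable $S\subseteq Z$ exists, and $|w|=|v|-|v\cap Y|-|v\cap Z|+|S|$. If $|x|<|y|$, so $|Y|\ge|Z|+1$, this construction (with $|S|$ taken minimal) always yields $|w|\le|v|$, in fact $|w|<|v|$ unless $v\cap D=\emptyset$ and then $w=v$ works directly; either way $w\le v$. If $|x|=|y|$, so $|Y|=|Z|$ and $\min D\in x$, hence $\min D\in Z$ and $\min Z<\min Y$, the same construction gives $|w|\le|v|$ with equality only when $v\cap Z=\emptyset$; in that residual case one takes $S$ to be the $|v\cap Y|$ smallest elements of $Z$, so that the first coordinate in which $w$ and $v$ differ lies in $Z\subseteq w$, giving $w<v$ in the simplicial order.

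The main obstacle is exactly this last bookkeeping inside the Claim: the naive choice $w=v\triangle x\triangle y$ achieves $|w\triangle y|=|v\triangle x|$ on the nose but need not satisfy $w\le v$, so one must trade a little distance for order-position and treat the weight-drop case and the colex-tiebreak case separately. As a sanity check and an alternative route, note that complementation $u\mapsto[n]\setminus u$ is an order-reversing automorphism of $Q_n$; a short computation turns the complement of $C^p[\mathcal{I}_a]$ into the closed $(n-p-1)$-neighbourhood of a final segment, so the lemma is equivalent to the classical Harper-type statement that the neighbourhood of an initial segment of $2^{[n]}$ is again an initial segment, and one could instead invoke that.
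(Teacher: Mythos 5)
Your proof is correct and takes essentially the same route as the paper's: both reduce the lemma to showing that $C^p[\mathcal{I}_a]$ is downward closed in the simplicial order, and both prove this by an exchange argument that replaces a set $v\in\mathcal{I}_a$ by an earlier set $w\le v$ (hence still in $\mathcal{I}_a$) that is at least as far from $y$ as $v$ is from $x$. The only difference is in how the witness is built --- the paper first reduces to the case $v\cap x=\emptyset$ and takes $w$ to be an initial piece of $y^c$, whereas you modify $v$ only on $x\triangle y$ and obtain the slightly sharper inequality $|w\triangle y|\ge|v\triangle x|$ --- which is bookkeeping rather than a different method.
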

\begin{proof}
It is enough to prove that for any $x,y\in 2^{[n]}$, if $x\notin C^p[\mathcal{I}_a]$ and $x<y$, then $y\notin C^p[\mathcal{I}_a]$.
Since $x\notin C^p[\mathcal{I}_a]$, there exists a set $z\in \mathcal{I}_a$ such that $|x\triangle z|\geq p+1$.
Let us divide the proof into cases depending on $x\cap z$ and show that $y\notin C^p[\mathcal{I}_a]$.

\noindent \textbf{Case 1} $x\cap z=\emptyset$.

Clearly, $z\subseteq x^c$ and $|x\triangle z|=|x|+|z|\leq n$.
It is easy to observe that $y^c<x^c$.
Also, in simplicial ordering, for any integer $q<|y^c|$, the set containing the first $q$ elements of $y^c$ is equal or comes before than the set containing the first $q$ elements of $x^c$.
Moreover, the set containing the first $q$ elements of $y^c$ is equal or comes before than the set containing any $q$ elements of $x^c$.
Let $k=\max\{0,(|x|+|z|-|y|)\}$ and let us consider the set  containing the first $k$ elements in $y^c$ and call it $z'$.
Since $|y^c|=n-|y|\geq|x|+|z|-|y|$, $z'$ is a well defined set.
If $z'=\emptyset$, then $z'\in \mathcal{I}_a$. If $z'\neq\emptyset$, then $|z'|=|x|+|z|-|y|=|z|-(|y|-|x|)\leq|z|$.
Since $z$ is the set containing $|z|$ elements of $x^c$, we have either $z'=z$ or $z'<z$.
In both cases, $z'\in \mathcal{I}_a$.
Since $z'\subseteq y^c$, we have $z'\cap y=\emptyset$ and $|z'\triangle y|=|z'|+|y|\geq|x|+|z|-|y|+|y|=|x|+|z|\geq p+1$.
Thus $y\notin C^p[\mathcal{I}_a]$.

\noindent \textbf{Case 2} $x\cap z\neq\emptyset$.

Let us define $z_0=z\backslash (x\cap z)$. Clearly, $z_0<z$ and $|x\triangle z_0|>|x\triangle z|\geq p+1$. Thus $z_0\in \mathcal{I}_a$ such that $x\cap z_0=\emptyset$ and $|x\triangle z_0|\geq p+1$. By using Case 1, here also we can show that $y\notin C^p[\mathcal{I}_a]$.
\end{proof}
Let us recall a result due to Tsukerman \cite{tsukerman2013isoperimetric} which is useful to prove our next result.
\begin{thm}\label{allicompressed}\cite{tsukerman2013isoperimetric}
For $B\subseteq2^{[n]}$, if $B$ is $i$-compressed for each $i\in[n]$, but not an initial segment, then $|B|=2^{n-1}$ and $B$ is of the following form

%

$B=\left\{\begin{array}{ll}
 \mathcal{I}_\ell\backslash\{\{\frac{n+3}{2},\frac{n+5}{2},\ldots,n\}\} &$where $n$ is odd and $\ell=\sum\limits_{i=0}^{\frac{n-1}{2}}{\binom{n}{i}}+1
   \\
  \mathcal{I}_{\ell'}\backslash\{\{1,\frac{n}{2}+2,\frac{n}{2}+3,\ldots,n\}\} &$where $n$ is even and $\ell'=\sum\limits_{i=0}^{\frac{n}{2}-1}{\binom{n}{i}}+\binom{n-1}{n/2-1}+1.   \\
  \end{array}\right.$
%
%
\end{thm}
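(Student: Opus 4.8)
The plan is to combine a down-set reduction with an analysis of the two extreme elements $a:=\max_{<}B$ and $b:=\min_{<}\big(2^{[n]}\setminus B\big)$; since $B$ is not an initial segment we have $b<a$ (otherwise every set outside $B$ would exceed every set of $B$, forcing $B=\mathcal I_{|B|}$). The case $n=1$ is trivial, so take $n\ge 2$. I would first show that $B$ is a down-set. Each $B_{i-}$ is an initial segment of $2^{[n]\setminus\{i\}}$, so $\emptyset\in B$ (otherwise $B=\emptyset$), and then $|B_{i-}|\ge|B_{i+}|$ for every $i$: if not, there is $w$ with $w\cup\{i\}\in B$, $w\notin B$, $i\notin w$; here $w\ne\emptyset$, and $h$-compression with $h:=\max w$ makes $B_{h+}$ an initial segment of $2^{[n]\setminus\{h\}}$ containing $(w\setminus\{h\})\cup\{i\}$ but not the $<$-smaller set $w\setminus\{h\}$, a contradiction. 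Hence $B_{i+}\subseteq B_{i-}\subseteq B$ for every $i$, i.e.\ $B$ is a down-set; in particular $[n]\notin B$, so $|a|\le n-1$, and, writing $j_0:=|b|$, $k_0:=|a|$, the family $B$ contains every set of size $<j_0$, is disjoint from every set of size $>k_0$, and for $j_0\le m\le k_0$ the trace $B\cap V_m$ is a proper, nonempty subfamily of $V_m$; thus $j_0\le k_0$.

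Next I would establish the dichotomy $k_0\le j_0+1$: for $i\in b$ the initial segment $B_{i+}$ of $2^{[n]\setminus\{i\}}$ cannot omit the set $b\setminus\{i\}$ and then contain a $<$-larger one, which forces every member of $B$ of size $>j_0$ to be disjoint from $b$, and a similar argument confines the partial part of $B$ to at most two consecutive levels. \textbf{Case $k_0=j_0=:k$.} Here $B=(V_0\cup\dots\cup V_{k-1})\cup\mathcal G$ with $\mathcal G\subseteq V_k$ not an initial segment of $V_k$, and, since $V_0\cup\dots\cup V_{k-1}$ is itself an initial segment, $\mathcal G$ is $i$-compressed within $V_k$ for every $i$ --- that is, $\mathcal G_{i-}$ is an initial segment of the $k$-subsets of $[n]\setminus\{i\}$ and $\{g\setminus\{i\}:i\in g\in\mathcal G\}$ an initial segment of the $(k-1)$-subsets. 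Applying this for the indices in $a\setminus b$ and in $b\setminus a$ (with $a=\max_{<}\mathcal G$, $b=\min_{<}(V_k\setminus\mathcal G)$) pins $a$ down as the first $k$-set beyond an initial block of $k$-sets and forces $b$ to mirror it, and a count against $\binom{n-1}{k-1}$ and $\binom{n-1}{k}$ shows this is consistent only for $k=n/2$ (hence $n$ even), $\mathcal G$ being exactly the family in Theorem \ref{allicompressed}; then $|B|=\sum_{i=0}^{n/2-1}\binom ni+\binom{n-1}{n/2-1}=2^{n-1}$ using $\binom{n}{n/2}=\binom{n-1}{n/2-1}+\binom{n-1}{n/2}$ and $\sum_{i=0}^{n/2-1}\binom ni=2^{n-1}-\tfrac12\binom{n}{n/2}$.

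\textbf{Case $k_0=j_0+1$.} Now $B=(V_0\cup\dots\cup V_{j_0-1})\cup\mathcal G_{j_0}\cup\mathcal G_{j_0+1}$ with $\mathcal G_{j_0}\subsetneq V_{j_0}$ and $\emptyset\ne\mathcal G_{j_0+1}\subseteq V_{j_0+1}$, where one checks each $\mathcal G_m$ is individually an initial segment of $V_m$, so the obstruction to $B$ being an initial segment lies only in how the two levels are glued. Exploiting that every $i$-plus trace $\{g\setminus\{i\}:i\in g\in B\}$ and every $i$-minus trace $\{g\in B:i\notin g\}$ must be an initial segment of $2^{[n]\setminus\{i\}}$ --- a condition that couples the two partial levels --- I would show $\mathcal G_{j_0+1}$ is the single simplicially-first set $S=\{1,\dots,j_0+1\}$ of its size, $\mathcal G_{j_0}=V_{j_0}\setminus\{T\}$ with $T$ the simplicially-last $j_0$-set, that compatibility over all $i$ forces $j_0=(n-1)/2$ (hence $n$ odd, $k_0=(n+1)/2$), and that then $T=\{(n+3)/2,\dots,n\}=S^{c}$; so $B$ is the first family in Theorem \ref{allicompressed}, with $|B|=\sum_{i=0}^{(n-1)/2}\binom ni=2^{n-1}$.

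The hardest part is the two pinning-down steps above --- showing that the all-$i$ compression conditions are compatible with a proper, non-initial family \emph{only} at the self-complementary threshold $|B|=2^{n-1}$. This is exactly where the arithmetic of the central binomial coefficient(s) is needed: for target sizes below the threshold an overfull level can always be straightened by a further compression $S_i$, whereas above it the complementary-family argument applies, so the only survivors are the two claimed swaps of an initial segment, which one then verifies directly to be $i$-compressed for every $i$. (Alternatively, the statement may simply be quoted from \cite{tsukerman2013isoperimetric}.)
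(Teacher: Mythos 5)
First, a point of comparison: the paper gives no proof of this statement at all — it is imported verbatim from \cite{tsukerman2013isoperimetric} (it is essentially the classical ``exceptional family'' lemma from the compression proof of Harper's vertex-isoperimetric theorem). Your closing parenthetical is therefore the correct reading of the paper, and there is no in-paper argument to measure your route against.

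Judged as a standalone proof, your sketch has a genuine gap at its load-bearing step. The down-set reduction is fine, and the observation that for $i\in b$ the section $B_{i+}$ forces every member of $B$ of size greater than $|b|$ to be disjoint from $b$ is correct. But the assertion that ``a similar argument confines the partial part of $B$ to at most two consecutive levels'' is not justified, and it is not a routine extension: what the section conditions give directly is that the extreme pair satisfies $a=b^{c}$ (consider $i\in a\cap b$ and $i\notin a\cup b$), i.e.\ $k_0=n-j_0$, and level-by-level counting does not rule out $k_0>j_0+1$ — for instance with $n=5$, $j_0=1$, $k_0=4$ the two derived constraints (``members of size $>j_0$ avoid $b$'' and ``non-members of size $<k_0$ contain $b$'') exactly partition each intermediate level, so the count is consistent and the configuration must be killed some other way. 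Likewise, in both of your cases the decisive claims — that compatibility over all $i$ forces $k=n/2$, resp.\ $j_0=(n-1)/2$, and identifies the swapped pair — are announced as ``I would show'' with no argument; these claims are the entire content of the theorem. The clean way to close everything at once is the unique-bad-pair argument: for any $u\notin B$, $v\in B$ with $u<v$, examining $B_{i+}$ for $i\in u\cap v$ and $B_{i-}$ for $i\notin u\cup v$ forces $v=u^{c}$; such a pair is unique, because complementation reverses the simplicial order and pairs position $k$ with position $2^{n}+1-k$, so $u$ must lie in the first half and $v$ in the second, which is incompatible with a second distinct pair; uniqueness then shows $v$ is the immediate successor of $u$ and $B=\mathcal{I}_{\ell}\setminus\{u\}$ with $v$ the $\ell$-th set, and the order-reversal places $u$ at position $2^{n-1}$ and $v$ at position $2^{n-1}+1$, which yields $|B|=2^{n-1}$ and both explicit families in one stroke. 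I would recommend either quoting the result, as the paper does, or restructuring your argument around that bad-pair lemma rather than the level-by-level analysis.
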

By using Lemma \ref{simplicialcommonngbr} and Theorem \ref{allicompressed}, let us obtain a positive answer to the Problem \ref{problem1}.
\begin{thm}\label{close}
For $n,p\in\mathbb{N}$, if $A\subseteq2^{[n]}$, 
then $|C^p[A]|\leq|C^p[\mathcal{I}_{|A|}]|$.
\end{thm}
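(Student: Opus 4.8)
The plan is to argue by induction on $n$, reducing a general $A$ to a fully compressed set via the operators $S_i$ and then invoking Theorem~\ref{allicompressed}. The case $p\ge n$ is immediate, since then $C^p[A]=2^{[n]}$ for every $A$; so assume $1\le p\le n-1$ (which forces $n\ge 2$). The key claim is that compression never decreases the number of common neighbours: $|C^p[S_i(A)]|\ge|C^p[A]|$ for every $i\in[n]$. To prove this I would pass to $i$-sections inside $2^{[n]\backslash\{i\}}$; unravelling the definition gives $C^p[A]_{i-}=C^p[A_{i-}]\cap C^{p-1}[A_{i+}]$ and $C^p[A]_{i+}=C^{p-1}[A_{i-}]\cap C^p[A_{i+}]$, with all sets and distances computed in the $(n-1)$-cube (the degenerate value $p-1=0$ being handled trivially). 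By Lemma~\ref{simplicialcommonngbr}, each of $C^p[\mathcal{I}_{|A_{i-}|}]$, $C^{p-1}[\mathcal{I}_{|A_{i+}|}]$, $C^{p-1}[\mathcal{I}_{|A_{i-}|}]$, $C^p[\mathcal{I}_{|A_{i+}|}]$ is an initial segment of $2^{[n]\backslash\{i\}}$, so $C^p[S_i(A)]_{i-}$ and $C^p[S_i(A)]_{i+}$ are each an intersection of two initial segments and hence have size equal to the smaller of the two. Applying the induction hypothesis (Theorem~\ref{close} in dimension $n-1$, with powers $p$ and $p-1$) together with $|C^p[A]_{i-}|\le\min\{|C^p[A_{i-}]|,|C^{p-1}[A_{i+}]|\}$ yields $|C^p[A]_{i-}|\le|C^p[S_i(A)]_{i-}|$, and symmetrically for the $i+$ sections; summing proves the claim.

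Granting the claim, cycling the compressions $S_1,\dots,S_n$ repeatedly produces, after finitely many steps (each step either fixes the set or moves it strictly down in the simplicial ordering of sets), a set $B$ with $|B|=|A|$, $|C^p[B]|\ge|C^p[A]|$ and $S_i(B)=B$ for all $i$. By Theorem~\ref{allicompressed}, either $B=\mathcal{I}_{|A|}$, in which case $|C^p[A]|\le|C^p[B]|=|C^p[\mathcal{I}_{|A|}]|$ and we are done, or $|A|=2^{n-1}$ and $B$ is one of the two exceptional configurations. A short check shows that in both of these cases $B=\mathcal{I}_t\cup\{m\}$ while $\mathcal{I}_{|A|}=\mathcal{I}_t\cup\{w\}$, where $w$ is the deleted vertex (the last set of size $\frac{n-1}{2}$ when $n$ is odd, and the last size-$\frac{n}{2}$ set containing $1$ when $n$ is even), $m$ is the next vertex in the simplicial ordering, and crucially $m=w^{c}$.

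It then remains to show $|C^p[B]|\le|C^p[\mathcal{I}_{|A|}]|$ for these two sets, and I expect this to be the delicate part (the compression machinery above being routine). Writing $D=C^p[\mathcal{I}_t]$, which is an initial segment by Lemma~\ref{simplicialcommonngbr}, we have $C^p[B]=D\cap C^p[\{m\}]$ and $C^p[\mathcal{I}_{|A|}]=D\cap C^p[\{w\}]$; since $m=w^{c}$ gives $|y\triangle w|=n-|y\triangle m|$ for every $y$, the desired inequality $|D\cap C^p[\{m\}]|\le|D\cap C^p[\{w\}]|$ is equivalent to the layer count
\[
\#\{y\in D:|y\triangle m|\ge p+1\}\ \ge\ \#\{y\in D:|y\triangle m|\le n-p-1\}.
\]
I would establish this from an explicit description of $D$: for $\mathcal{I}_t$ only the constraints coming from $\emptyset$ and from the maximal layer present are binding, and removing the single vertex $w$ perturbs $D$ in a controlled way, after which a layer-by-layer count settles the inequality; the even and odd cases run in parallel. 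A convenient reduction is that the exceptional configurations only need attention when $p\le n-2$, since for $p=n-1$ one has $|C^{n-1}[A]|=2^{n}-|A|$ for every $A$, so the bound holds with equality; this keeps the bookkeeping finite.
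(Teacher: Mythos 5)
Your proposal is correct and follows essentially the same route as the paper: induction on $n$ via the $i$-section identity $C^p[A]_{i\pm}=C^p[A_{i\pm}]\cap C^{p-1}[A_{i\mp}]$, Lemma~\ref{simplicialcommonngbr} to make the two intersected factors nested initial segments, iterated compression until Theorem~\ref{allicompressed} applies, and then a direct check of the two exceptional half-cube configurations. The only divergence is cosmetic and lies in that last check: the paper counts the minimal collection $U$ that must be deleted from $\mathcal{I}_{\ell-1}$ before a new common neighbour appears, whereas you exploit $m=w^{c}$ (and dispose of $p=n-1$ via $|C^{n-1}[A]|=2^{n}-|A|$) to reduce to a layer count, which does close as you expect since for $p\le n-2$ every $y\in D$ satisfies $|y\triangle m|\ge n-p>n-p-1$, making your right-hand count zero and giving $C^p[B]\subseteq D=C^p[\mathcal{I}_{|A|}]$ directly.
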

\begin{proof}
Let $n,p\in\mathbb{N}$ and $A\subseteq2^{[n]}$. 
Let us prove the result by induction on `$n$'.
For $n=1,2$,  we can easily see that  $|C^p[A]|=|C^p[\mathcal{I}_{|A|}]|$.
Now, let us assume that the result is true for $n=q-1$, where $q\geq3$.
That is, for any $p\in\mathbb{N}$ and $A\subseteq 2^{[q-1]}$
, we have $|C^p[A]|\leq|C^p[\mathcal{I}_{|A|}]|$.
Let us prove the result for $n=q$.
If $p\geq q$ and $A\subseteq 2^{[q]}$, then $|C^p[A]|=2^{[q]}=|C^p[\mathcal{I}_{|A|}]|$.
So let us consider $1\leq p\leq q-1$.
Let $i\in[q]$. Clearly, $S_i(A)$, the $i$-compression of $A$ is such that $|A|=|S_i(A)|$.

\noindent \textbf{Claim:} $C^p[A]=\big(C^{p-1}[A_{i+}]\cap C^p[A_{i-}]\big)\cup\big((C^p[A_{i+}]\cap C^{p-1}[A_{i-}])+\{i\}\big)$

Note that, if $B$ is a set considered in $2^{[\ell]}$, then $C^p[B]$ will also be considered with respect to $2^{[\ell]}$. 
Therefore in the above claim, $C^p[A]$ will be a set in $2^{[q]}$ and the other four $C^p$ on the right side will be sets in $2^{[q]\backslash\{i\}}\cong 2^{[q-1]}$.

\noindent Let $x\in C^p[A]$. Let us consider the cases $i\notin x$ and $i\in x$ separately.

%
\noindent \textbf{Case 1} $i\notin x$.

Here, $x\in C^p[A]$

$\Longleftrightarrow$ for every $y\in A=(A_{i+}+\{i\})\cup A_{i-}$, $|x\triangle y|\leq p$.

$\Longleftrightarrow$ for every $w\in A_{i+}$, $|x\triangle (w\cup\{i\})|\leq p$ and for every $z\in A_{i-}$, $|x\triangle z|\leq p$.

$\Longleftrightarrow$ for every $w\in A_{i+}$, $|x\triangle w|\leq p-1$ and for every $z\in A_{i-}$, $|x\triangle z|\leq p.$

$\Longleftrightarrow x\in C^{p-1}[A_{i+}] $ and $ x\in C^p[A_{i-}]$.

$\Longleftrightarrow x\in C^{p-1}[A_{i+}] \cap C^p[A_{i-}]$.

\noindent \textbf{Case 2} $i\in x$.

In this case, $x\in C^p[A]$

$\Longleftrightarrow$ for every $y\in A$, $|x\triangle y|\leq p$.

$\Longleftrightarrow$ for every $w\in A_{i+}$, $|x\triangle (w\cup\{i\})|\leq p$ and for every $z\in A_{i-}$, $|x\triangle z|\leq p$.

$\Longleftrightarrow$ for every $w\in A_{i+}$, $|(x\backslash\{i\})\triangle w|\leq p$ and for every $z\in A_{i-}$, $|(x\backslash\{i\})\triangle z|\leq p-1$.

$\Longleftrightarrow x\backslash\{i\}\in C^{p}[A_{i+}] $ and $x\backslash\{i\}\in C^{p-1}[A_{i-}]$.

$\Longleftrightarrow x\backslash\{i\}\in C^p[A_{i+}]\cap C^{p-1}[A_{i-}]$.

$\Longleftrightarrow x\in (C^p[A_{i+}]\cap C^{p-1}[A_{i-}])+\{i\}$.

Hence for any $A\subseteq 2^{[n]}$, $C^p[A]=\big(C^{p-1}[A_{i+}]\cap C^p[A_{i-}]\big)\cup\big((C^p[A_{i+}]\cap C^{p-1}[A_{i-}])+\{i\}\big)$. 
By replacing $A$ with $S_i(A)$ we observe  that

\noindent$\begin{array}{ll}
C^p[S_i(A)] &=\big(C^p[S_i(A)_{i-}]\cap C^{p-1}[S_i(A)_{i+}]\big)\cup\big((C^p[S_i(A)_{i+}]\cap C^{p-1}[S_i(A)_{i-}])+\{i\}\big)\\
&=\big(C^p[\mathcal{I}_{|A_{i-}|}]\cap C^{p-1}[\mathcal{I}_{|A_{i+}|}]\big)\cup\big((C^p[\mathcal{I}_{|A_{i+}|}]\cap C^{p-1}[\mathcal{I}_{|A_{i-}|}])+\{i\}\big)\\
\end{array}$

Since $A_{i-}, A_{i+}\subseteq 2^{[q]\backslash i}\cong 2^{[q-1]}$, by our assumption we see that $|C^p[A_{i-}]|\leq|C^p[\mathcal{I}_{|A_{i-}|}]|$, $|C^p[A_{i+}]|\leq|C^p[\mathcal{I}_{|A_{i+}|}]|$, $|C^{p-1}[A_{i-}]|\leq|C^{p-1}[\mathcal{I}_{|A_{i-}|}]|$ and $|C^{p-1}[A_{i+}]|\leq|C^{p-1}[\mathcal{I}_{|A_{i+}|}]|$.
By using Lemma \ref{simplicialcommonngbr}, both $C^p[\mathcal{I}_{|A_{i-}|}]$ and $C^{p-1}[\mathcal{I}_{|A_{i+}|}]$ are initial segments of $2^{[q]\backslash\{i\}}$ and hence one of these sets is a subset of the other.
Similarly, for $C^p[\mathcal{I}_{|A_{i+}|}]$ and $C^{p-1}[\mathcal{I}_{|A_{i-}|}]$, one of these is a subset of the other.
Thus we see that 

\noindent $|C^p[A_{i-}]\cap C^{p-1}[A_{i+}]|\leq\min{\{|C^p[\mathcal{I}_{|A_{i-}|}]|,|C^{p-1}[\mathcal{I}_{|A_{i+}|}]|\}}=|C^p[\mathcal{I}_{|A_{i-}|}]\cap C^{p-1}[\mathcal{I}_{|A_{i+}|}]|$ and \\
$|C^p[A_{i+}]\cap C^{p-1}[A_{i-}]|\leq\min{\{|C^p[\mathcal{I}_{|A_{i+}|}]|,|C^{p-1}[\mathcal{I}_{|A_{i-}|}]|\}}=|C^p[\mathcal{I}_{|A_{i+}|}]\cap C^{p-1}[\mathcal{I}_{|A_{i-}|}]|$.

\begin{equation}
\begin{aligned}[b]
\textnormal{Hence, }|C^p[A]| &=|C^p[A_{i-}]\cap C^{p-1}[A_{i+}]|+|C^p[A_{i+}]\cap C^{p-1}[A_{i-}]|\\
&\leq |C^p[\mathcal{I}_{|A_{i-}|}]\cap C^{p-1}[\mathcal{I}_{|A_{i+}|}]|+|C^p[\mathcal{I}_{|A_{i+}|}]\cap C^{p-1}[\mathcal{I}_{|A_{i-}|}]|\\
&=|C^p[S_i(A)]|
\end{aligned}\label{asia}
\end{equation}

Let $A_0=A$ and $A_1=S_i(A)$. 
If there exists an integer $j\in[q]$ such that $A_1$ is not $j$-compressed,
 define $A_2=S_j(A_1)$. 
Again, if there exists an integer $\ell\in[q]$ such that $A_2$ is not $\ell$-compressed,
 define $A_3=S_\ell(A_2)$. 
 Continue this process until there exists an integer $k\in\mathbb{N}$ such that $A_k$ is $t$-compressed for every $t\in[q]$.
Since $A=A_0>A_1>A_2>\cdots$ in simplicial ordering of sets and $\left|2^{\left[2^{[q]}\right]}\right|$ is finite, we will get an integer $k$ such that $A_k$ is $t$-compressed for every $t\in[q]$. 
Also for any $j\in\{1,2,\ldots,k\}$, since $A_j$ is some $t$-compression of $A_{j-1}$, equation \ref{asia} will imply that $|C^p[A_{j-1}]|\leq |C^p[A_j]|$. 
Thus $|C^p[A_{0}]|\leq |C^p[A_1]|\leq |C^p[A_2]|\leq \cdots \leq |C^p[A_k]|$. 
In addition, $|A|=|A_{0}|=|A_1|=\cdots=|A_k|$. 
Finally, if  $|C^p[A_k]|\leq |C^p[\mathcal{I}_{|A_k|}]|= |C^p[\mathcal{I}_{|A|}]|$, we are done. 
So, let us  establish that $|C^p[A_k]|\leq |C^p[\mathcal{I}_{|A_k|}]|$.   
Since $A_k$ is $t$-compressed for every $t\in[q]$, by using Theorem \ref{allicompressed}, either $A_k=\mathcal{I}_{|A|}$ or $A_k$ is of the form given in Theorem \ref{allicompressed}.
If $A_k=\mathcal{I}_{|A|}$, there is nothing to prove.
Next, let us consider $A_k$ as given in Theorem \ref{allicompressed}.

For $q$ being odd, $A_k=\mathcal{I}_\ell\backslash\big{\{}\{\frac{q+3}{2},\frac{q+5}{2},\ldots,q\}\big{\}}$, where $\ell=\sum\limits_{i=0}^{\frac{q-1}{2}}{\binom{q}{i}}+1$.
Let us begin by considering $C^p[\mathcal{I}_{\ell-1}]$ and move towards $C^p[A_k]$.
Without much difficulty, one can observe that $C^p[\mathcal{I}_{\ell-1}]=\bigcup\limits_{i=0}^{p-\frac{q-1}{2}}{V_i}$. 
Let us find the smallest collection of sets in $V_{\frac{q-1}{2}}$, say $U$, such that \linebreak $C^p[\mathcal{I}_{\ell-1}\backslash U]=C^p[\mathcal{I}_{\ell-1}]\cup \big{\{}\{1,2,\ldots,p-\frac{q-1}{2}+1\}\big{\}}$, that is, we would like to know how many sets are to be removed from $\mathcal{I}_{\ell-1}$ if one wants to increase $C^p[\mathcal{I}_{\ell-1}]$ by one set in simplicial ordering, namely $\{1,2,\ldots,p-\frac{q-1}{2}+1\}$. 
Clearly, $U$ will be the set containing  the sets  $x\in2^{[q]}$ such that $x\cap\{1,2,\ldots,p-\frac{q-1}{2}+1\}=\emptyset$.
Hence $|U|=\binom{q-(p-\frac{q-1}{2}+1)}{\frac{q-1}{2}}$.
When $p=q-1$, $|U|=1$ and $C^p[\mathcal{I}_{\ell-1}]=\mathcal{I}_{\ell-1}$. 
Therefore $C^p\left[\mathcal{I}_{\ell-1}\backslash\big{\{}\{\frac{q+3}{2},\frac{q+5}{2},\ldots,q\}\big{\}}\right]=C^p[\mathcal{I}_{\ell-1}]\cup\big{\{}\{1,2,\ldots,\frac{q+1}{2}\}\big{\}}=\mathcal{I}_{\ell-1}\cup\big{\{}\{1,2,\ldots,\frac{q+1}{2}\}\big{\}}$.
Also, one can see that $C^p[A_k]=
\mathcal{I}_{\ell-1}\cup\big{\{}\{1,2,\ldots,\frac{q+1}{2}\}\big{\}}
\backslash\big{\{}\{\frac{q+3}{2},\frac{q+5}{2},\ldots,q\}\big{\}}={A}_k$. 
Thus $|C^p[A_k]|=|A_k|=\ell-1=|\mathcal{I}_{\ell-1}|=|C^p[\mathcal{I}_{\ell-1}]|$.
When $p<q-1$, $|U|\geq\frac{q+1}{2}$.
Therefore $C^p\left[\mathcal{I}_{\ell-1}\right]=C^p\left[\mathcal{I}_{\ell-1}\backslash\big\{\{\frac{q+3}{2},\frac{q+5}{2},\ldots,q\}\big\}\right]\supseteq C^p[A_k]$.
Thus $|C^p[A_k]|\leq|C^p[\mathcal{I}_{\ell-1}]|$.

For $q$ being even, $A_k=\mathcal{I}_{\ell'}\backslash\{\{1,\frac{q}{2}+2,\frac{q}{2}+3,\ldots,q\}\}$, where $\ell'=\sum\limits_{i=0}^{\frac{q}{2}-1}{\binom{q}{i}}+\binom{q-1}{\frac{q}{2}-1}+1$.
Here also, let us begin with $C^p[\mathcal{I}_{\ell'-1}]$ and move towards $C^p[A_k]$.
Clearly, one can see that $C^p[\mathcal{I}_{\ell'-1}]=\left(\bigcup\limits_{i=0}^{p-\frac{q}{2}}{V_i}\right)\cup\{x\in V_{(p-\frac{q}{2}+1)}:1\in x\}$. 
As done in the odd case, let us find the smallest collection of sets in $V_{\frac{q}{2}}$, say $U'$, such that $C^p[\mathcal{I}_{\ell'-1}\backslash U']=C^p[\mathcal{I}_{\ell'-1}]\cup\big\{\{2,3,\ldots,p-\frac{q}{2}+2\}\big\}$.
Clearly, $U'$ will be the set containing  all  $x\in2^{[q]}$ such that $1\in x$ and $x\cap\{2,3,\ldots,p-\frac{q}{2}+2\}=\emptyset$.
Hence $|U'|=\binom{q-(p-\frac{q}{2}+2)}{\frac{q}{2}-1}$.
When $p=q-1$, $|U'|=1$ and $C^p[\mathcal{I}_{\ell'-1}]=\mathcal{I}_{\ell'-1}$. 
Therefore $C^p\left[\mathcal{I}_{\ell'-1}\backslash\big{\{}\{1,\frac{q}{2}+2,\frac{q}{2}+3,\ldots,q\}\big{\}}\right]=C^p[\mathcal{I}_{\ell'-1}]\cup\big{\{}\{2,3,\ldots,p-\frac{q}{2}+2\}\big{\}}=\mathcal{I}_{\ell'-1}\cup\big{\{}\{2,3,\ldots,p-\frac{q}{2}+2\}\big{\}}$.
Also one can see that $C^p[A_k]=
\mathcal{I}_{\ell'-1}\cup\big{\{}\{2,3,\ldots,\frac{q}{2}+1\}\big{\}}
\backslash\big{\{}\{1,\frac{q}{2}+2,\frac{q}{2}+3,\ldots,q\}\big{\}}={A}_k$. 
Thus $|C^p[A_k]|=|A_k|=\ell'-1=|\mathcal{I}_{\ell'-1}|=|C^p[\mathcal{I}_{\ell'-1}]|$.
When $p<q-1$, $|U'|\geq\frac{q}{2}$.
Therefore $C^p\left[\mathcal{I}_{\ell'-1}\right]=C^p\left[\mathcal{I}_{\ell'-1}\backslash\big\{\{1,\frac{q}{2}+2,\frac{q}{2}+3,\ldots,q\}\big\}\right]\supseteq C^p[A_k]$.
Thus $|C^p[A_k]|\leq|C^p[\mathcal{I}_{\ell'-1}]|$.
In both cases, we have showed that $|C^p[A_k]|\leq|C^p[\mathcal{I}_{|A|}]|$.
\end{proof}
Note that, Theorem \ref{close} cannot be said for $C^p(A)$ and $C^p(\mathcal{I}_{|A|})$. 
This is because $\mathcal{I}_{|A|}\subseteq C^p[\mathcal{I}_{|A|}]$ and with no condition on $A$, $A$ may not be a subset of $C^p[A]$. 
Thus even though $|C^p[A]|\leq|C^p[\mathcal{I}_{|A|}]|$, we cannot say that $|C^p(A)|\leq|C^p(\mathcal{I}_{|A|})|$. 
To achieve this, we shall impose the condition that $A\subseteq2^{[n]}$ such that for every pair $x,y\in A, |x\triangle y|\leq p$. 
This can be seen in Corollary \ref{open}.
\begin{cor}\label{open}
For $n,p\in\mathbb{N}$, if $A\subseteq2^{[n]}$ such that for every pair $x,y\in A, |x\triangle y|\leq p$, 
then $|C^p(A)|\leq\left|C^p\left(\mathcal{I}_{|A|}\right)\right|$.
\end{cor}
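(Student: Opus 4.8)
The plan is to reduce the statement to Theorem \ref{close} by first trading $C^p(\cdot)$ for $C^p[\cdot]$. The hypothesis that $|x\triangle y|\leq p$ for every pair $x,y\in A$ says exactly that every element of $A$ belongs to $C^p[A]$, so $A\subseteq C^p[A]$ and therefore $|C^p(A)|=|C^p[A]|-|A|$. Hence it suffices to show that $|C^p[A]|-|A|\leq\left|C^p\left(\mathcal{I}_{|A|}\right)\right|$.

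First I would apply Theorem \ref{close} to obtain $|C^p[A]|\leq\left|C^p\left[\mathcal{I}_{|A|}\right]\right|$. The remaining point is that the initial segment $\mathcal{I}_{|A|}$ is also contained in $C^p\left[\mathcal{I}_{|A|}\right]$, even though (as the remark preceding the corollary notes) this can fail for an arbitrary set of size $|A|$. To see it, use Lemma \ref{simplicialcommonngbr} to write $C^p\left[\mathcal{I}_{|A|}\right]=\mathcal{I}_b$ for some $b\in[2^n]$; then $b=\left|\mathcal{I}_b\right|=\left|C^p\left[\mathcal{I}_{|A|}\right]\right|\geq|C^p[A]|\geq|A|$, where the last inequality again uses the clique hypothesis ($A\subseteq C^p[A]$). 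So $b\geq|A|$, giving $\mathcal{I}_{|A|}\subseteq\mathcal{I}_b=C^p\left[\mathcal{I}_{|A|}\right]$, and consequently $\left|C^p\left(\mathcal{I}_{|A|}\right)\right|=\left|C^p\left[\mathcal{I}_{|A|}\right]\right|-|A|$.

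Putting the pieces together yields $|C^p(A)|=|C^p[A]|-|A|\leq\left|C^p\left[\mathcal{I}_{|A|}\right]\right|-|A|=\left|C^p\left(\mathcal{I}_{|A|}\right)\right|$, which is the claim. I do not expect a genuine obstacle here; the only thing one has to notice is that the clique condition on $A$ is precisely what is needed twice over — once to replace $C^p(A)$ by $C^p[A]$ minus $|A|$, and once, through the displayed inequality chain, to force $\mathcal{I}_{|A|}\subseteq C^p\left[\mathcal{I}_{|A|}\right]$ so that the same substitution is valid on the initial-segment side. All the real work has already been done in Theorem \ref{close} and Lemma \ref{simplicialcommonngbr}.
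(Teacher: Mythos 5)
Your proof is correct and follows essentially the same route as the paper: apply Theorem \ref{close}, use the clique hypothesis to get $A\subseteq C^p[A]$ and hence $|C^p(A)|=|C^p[A]|-|A|$, and then compare with the initial-segment side. The only difference is cosmetic: where you invoke Lemma \ref{simplicialcommonngbr} to prove the containment $\mathcal{I}_{|A|}\subseteq C^p[\mathcal{I}_{|A|}]$ and obtain an equality, the paper simply uses the trivial bound $\left|C^p\left(\mathcal{I}_{|A|}\right)\right|\geq\left|C^p\left[\mathcal{I}_{|A|}\right]\right|-|\mathcal{I}_{|A|}|$, which is all that is needed.
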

\begin{proof}
By using Theorem \ref{close}, $|C^p[A]|\leq\left|C^p\left[\mathcal{I}_{|A|}\right]\right|$.
Since $A\subseteq2^{[n]}$ such that for every pair $x,y\in A$, $|x\triangle y|\leq p$, we see that $A\subseteq C^p[A]$ and hence $\left|C^p\left(A\right)\right|=\left|C^p\left[A\right]\right|-|A|$. 
Also $\left|C^p\left(\mathcal{I}_{|A|}\right)\right|\geq\left|C^p\left[\mathcal{I}_{|A|}\right]\right|-|\mathcal{I}_{|A|}|=\left|C^p\left[\mathcal{I}_{|A|}\right]\right|-|A|$.
Hence, $|C^p(A)|=\left|C^p\left[A\right]\right|-|A|\leq\left|C^p\left[\mathcal{I}_{|A|}\right]\right|-|A|\leq\left|C^p\left(\mathcal{I}_{|A|}\right)\right|$.
\end{proof}
Next, we find the number of common neighbors for some very particular initial segments of some powers of hypercubes. These will be useful in establishing a better upper bound for the b-chromatic number of some powers of hypercubes. This can be seen in Theorem \ref{newboundhypercube}.
\begin{thm}\label{x-k}
For $n$ being odd and $\frac{n+1}{2}\leq p\leq n-2$, if $r=\sum\limits_{i=0}^{p-\frac{n-1}{2}}{\binom{n}{i}}$ and $s=\binom{p}{p-\frac{n-1}{2}}$, then $|C^p(\mathcal{I}_{(r-s)})|=2^{n-1}-(r-2s)$.
For $n$ being even and $\frac{n}{2}+1\leq p\leq n-2$, if $r'=\sum\limits_{i=0}^{p-\frac{n}{2}}{\binom{n}{i}}+ \binom{n-1}{p-\frac{n}{2}}$ and $s'=\binom{p-1}{p-\frac{n}{2}}$, then $|C^p(\mathcal{I}_{(r'-s')})|=2^{n-1}-(r'-2s')$.
\end{thm}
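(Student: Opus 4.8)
The plan is to make the families $\mathcal{I}_{r-s}$ and $\mathcal{I}_{r'-s'}$ completely explicit and then read off $C^p[\,\cdot\,]$ straight from its defining condition $y\in C^p[A]\iff|x\triangle y|\le p$ for all $x\in A$, using the identity $|x\triangle y|=|x|+|y|-2|x\cap y|$ throughout; this is in the same spirit as the closing computation in the proof of Theorem~\ref{close}, where one starts from a known $C^p$ of an initial segment and tracks how deleting a few sets enlarges it.

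First I would set $t=p-\tfrac{n-1}{2}$ when $n$ is odd and $t=p-\tfrac n2$ when $n$ is even, so that the hypotheses on $p$ read $1\le t\le\tfrac{n-3}{2}$, resp.\ $1\le t\le\tfrac n2-2$. With this notation $\mathcal{I}_r$ is exactly the family of all subsets of $[n]$ of size at most $t$, and $\mathcal{I}_{r'}$ is the family of all subsets of size at most $t$ together with all $(t{+}1)$-subsets that contain the element $1$ --- precisely the sets that occurred as $C^p[\mathcal{I}_{\ell-1}]$, resp.\ $C^p[\mathcal{I}_{\ell'-1}]$, inside the proof of Theorem~\ref{close}. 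Next I would identify the last $s=\binom pt$, resp.\ last $s'=\binom{p-1}{t}$, sets of $\mathcal{I}_r$, resp.\ $\mathcal{I}_{r'}$, in the simplicial order: a hockey-stick computation shows they are exactly the $t$-subsets of $W:=\{n-p+1,\dots,n\}$, resp.\ the $(t{+}1)$-sets $\{1\}\cup S$ with $S\subseteq\{n-p+2,\dots,n\}$. Hence $\mathcal{I}_{r-s}$ consists of all sets of size $\le t-1$ together with the $t$-sets meeting $[n-p]$, and $\mathcal{I}_{r'-s'}$ consists of all sets of size $\le t$ together with the $(t{+}1)$-sets containing $1$ and meeting $\{2,\dots,n-p+1\}$.

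Then I would compute $C^p[\mathcal{I}_{r-s}]$ (and $C^p[\mathcal{I}_{r'-s'}]$) by cases on $|y|$. From $|x\triangle y|\le|x|+|y|$ one gets at once that every $y$ with $|y|\le\tfrac{n-1}{2}$ lies in $C^p[\mathcal{I}_{r-s}]$ and that nothing of strictly larger size can, so only the layer $V_{(n+1)/2}$ is in doubt; there a set $y$ survives iff every $t$-subset of $y^c$ was deleted, which (as $|y^c|\ge t$) happens iff $y^c\subseteq W$, i.e.\ $[n-p]\subseteq y$. These $y$ number $\binom{p}{t}=s$ and form an initial segment of $V_{(n+1)/2}$, in agreement with Lemma~\ref{simplicialcommonngbr}. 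The even case runs along the same lines, the one extra point being that the critical layer $V_{n/2}$ splits into the $\binom{n-1}{n/2-1}$ sets containing $1$, all of which survive, and the $\binom{p-1}{t}=s'$ sets avoiding $1$ but containing $\{2,\dots,n-p+1\}$. Summing up and using $\sum_{i=0}^{(n-1)/2}\binom ni=2^{n-1}$, resp.\ $\sum_{i=0}^{n/2-1}\binom ni+\binom{n-1}{n/2-1}=2^{n-1}$, gives $|C^p[\mathcal{I}_{r-s}]|=2^{n-1}+s$ and $|C^p[\mathcal{I}_{r'-s'}]|=2^{n-1}+s'$.

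Finally, since every member of $\mathcal{I}_{r-s}$ has size at most $t$ (resp.\ every member of $\mathcal{I}_{r'-s'}$ has size at most $t+1$), any two of them are at distance at most $2t\le p$ (resp.\ $2t+2\le p$) --- and this is exactly where the bound $p\le n-2$ enters --- so $\mathcal{I}_{r-s}\subseteq C^p[\mathcal{I}_{r-s}]$ and therefore $|C^p(\mathcal{I}_{r-s})|=|C^p[\mathcal{I}_{r-s}]|-|\mathcal{I}_{r-s}|=(2^{n-1}+s)-(r-s)=2^{n-1}-(r-2s)$, and identically in the even case. I expect the main obstacle to be the analysis of the critical layer: arguing precisely which sets of size $\tfrac{n+1}{2}$ (resp.\ $\tfrac n2$) remain in $C^p[\mathcal{I}_{r-s}]$ after the deletion, and checking that this family is the very initial segment forced by Lemma~\ref{simplicialcommonngbr}; the hockey-stick identification of the deleted sets as the $t$-subsets of $W$ is the other spot requiring care.
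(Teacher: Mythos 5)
Your proposal is correct and follows essentially the same route as the paper: make $\mathcal{I}_{r-s}$ (resp.\ $\mathcal{I}_{r'-s'}$) explicit, observe that all of $\mathcal{I}_{2^{n-1}}$ lies in $C^p[\cdot]$, and show that exactly $s$ (resp.\ $s'$) sets of the critical layer $V_{(n+1)/2}$ (resp.\ $V_{n/2}\cap\overline{\mathcal{I}_{2^{n-1}}}$) survive, with Lemma~\ref{simplicialcommonngbr} controlling the rest. One small caution: the exclusion of all $y$ with $|y|>\frac{n+1}{2}$ cannot follow ``at once'' from the upper bound $|x\triangle y|\le|x|+|y|$ --- it needs either an explicit witness $x\in\mathcal{I}_{r-s}$ at distance at least $p+1$ from $y$, or (as the paper does) the initial-segment property of $C^p[\mathcal{I}_{r-s}]$ from Lemma~\ref{simplicialcommonngbr} once the critical layer is seen to be incomplete.
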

\begin{proof}
For $n$ being odd and $\frac{n+1}{2}\leq p\leq n-2$, let us first prove that $C^p[\mathcal{I}_r]=\mathcal{I}_{2^{n-1}}$.
Let $b\in \mathcal{I}_{2^{n-1}}$. 
Since $2^{n-1}=\sum\limits_{i=0}^{\frac{n-1}{2}}{\binom{n}{i}}$, we see that $|a\triangle b|\leq (p-\frac{n-1}{2})+\frac{n-1}{2}\leq p$, for every $a\in \mathcal{I}_r$.
Thus $\mathcal{I}_{2^{n-1}}\subseteq C^p[\mathcal{I}_r]$.
Since $\{\frac{n+3}{2},\frac{n+5}{2},\ldots,p+1\}\in\mathcal{I}_r$ and $\{1,2,\ldots,\frac{n+1}{2}\}$, the first element in the simplicial ordering next to $\mathcal{I}_{2^{n-1}}$ have symmetric difference $p+1$, we see that $\{1,2,\ldots,\frac{n+1}{2}\}\notin C^p[\mathcal{I}_r]$.
By using Lemma \ref{simplicialcommonngbr}, we see that $C^p[\mathcal{I}_r]\subseteq\mathcal{I}_{2^{n-1}}$.
Hence $C^p[\mathcal{I}_r]=\mathcal{I}_{2^{n-1}}$.
Note that for $t\leq r$, $C^p[\mathcal{I}_t]\supseteq C^p[\mathcal{I}_r]$.
Since $s=\binom{p}{p-\frac{n-1}{2}}<\binom{n}{p-\frac{n-1}{2}}$, we have $C^p(\mathcal{I}_{(r-s)})=(\mathcal{I}_{2^{n-1}}\setminus\mathcal{I}_{r-s})\cup (C^p[\mathcal{I}_{(r-s)}]\cap V_{\frac{n+1}{2}})$.
Thus $|C^p(\mathcal{I}_{(r-s)})|=2^{n-1}-(r-s)+|C^p[\mathcal{I}_{(r-s)}]\cap V_{\frac{n+1}{2}}|$.
So let us find the value of $|C^p[\mathcal{I}_{(r-s)}]\cap V_{\frac{n+1}{2}}|$.
Clearly $\mathcal{I}_{(r-s)}=\left(\bigcup\limits_{0\leq i<p-\frac{n-1}{2}} V_{i}\right)\cup\big\{a\in V_{p-\frac{n-1}{2}}: a$ has at least one of the elements from $\{1,2,\ldots,n-p\}\big\}$.
Let $b\in V_{\frac{n+1}{2}}$ such that $\{1,2,\ldots,n-p\}\subseteq b$, then for any element $a\in \mathcal{I}_{(r-s)}\cap V_{(p-\frac{n-1}{2})}$,\ $a\cap b\neq\emptyset$ and $|a\triangle b|\leq (p-\frac{n-1}{2}-1)+(\frac{n+1}{2}-1)=p-1$. Hence $b\in C^p[\mathcal{I}_{(r-s)}]$ and $|C^p[\mathcal{I}_{(r-s)}]\cap V_{\frac{n+1}{2}}|\geq \binom{n-(n-p)}{\frac{n+1}{2}-(n-p)}=\binom{p}{p-\frac{n-1}{2}}=s$.
Let us consider the $(s+1)^{\textnormal{th}}$ element in $V_{\frac{n+1}{2}}$, namely $\left\{1,2,\ldots,n-p-1,n-p+1,n-p+2,\ldots,\frac{n+3}{2}\right\}$ and  $\left\{n-p,\frac{n+5}{2},\frac{n+7}{2},\ldots,p+1\right\}\in \mathcal{I}_{(r-s)}\cap V_{p-\frac{n-1}{2}}$. 
Since the symmetric difference between these two sets is $p+1$, $\{1,2,\ldots,n-p-1,n-p+1,n-p+2,\ldots,\frac{n+3}{2}\}\notin C^p[\mathcal{I}_{(r-s)}]$ and hence  by using Lemma \ref{simplicialcommonngbr}, we see that $|C^p[\mathcal{I}_{(r-s)}]\cap V_{\frac{n+1}{2}}|=s$.
Therefore, $|C^p(\mathcal{I}_{(r-s)})|=2^{n-1}-(r-s)+s=2^{n-1}-(r-2s)$.

Next, for $n$ being even and $\frac{n}{2}+1\leq p\leq n-2$, the proof will be similar to the one given for odd case. Let us first prove that  $C^p[\mathcal{I}_{r'}]=\mathcal{I}_{2^{n-1}}$.
Let $b\in \mathcal{I}_{2^{n-1}}$.
Since $n$ is even, $2^{n-1}=\sum\limits_{i=0}^{\frac{n}{2}-1}{\binom{n}{i}}+ \binom{n-1}{\frac{n}{2}-1}$ and hence $\mathcal{I}_{2^{n-1}}=\left(\bigcup\limits_{0\leq i<\frac{n}{2}} V_{i}\right)\cup\{a\in V_{\frac{n}{2}}: a$ contains $1\}$. 
Also $\mathcal{I}_{r'}=\left(\bigcup\limits_{0\leq i\leq p-\frac{n}{2}} V_{i}\right)\cup \{a\in V_{p-\frac{n}{2}+1}: a$ contains $1\}$. 
Thus for  $a\in \mathcal{I}_{r'}$,  $|a\triangle b|\geq p+1$ can happen only when $b\in V_{\frac{n}{2}}$ and $a\in V_{p-\frac{n}{2}+1}$. 
But in this case, $1\in a\cap b$ and therefore $|a\triangle b|\leq (p-\frac{n}{2})+\frac{n}{2}-1\leq p-1$. 
Hence  $\mathcal{I}_{2^{n-1}}\subseteq C^p[\mathcal{I}_{r'}]$. 
%
%
Also, the first element in the simplicial ordering that comes next to $\mathcal{I}_{2^{n-1}}$, namely $\{2,3,\ldots,\frac{n}{2}+1\}$ and $\{1,\frac{n}{2}+2,\frac{n}{2}+3,\ldots,p+1\}\in \mathcal{I}_{r'}$ have symmetric difference $p+1$ and hence by using Lemma \ref{simplicialcommonngbr}, we see that $C^p[\mathcal{I}_{r'}]=\mathcal{I}_{2^{n-1}}$.
Since $s'=\binom{p-1}{p-\frac{n}{2}}<\binom{n-1}{p-\frac{n}{2}}$, we have $|C^p(\mathcal{I}_{({r'}-s')})|=2^{n-1}-({r'}-s')+|C^p[\mathcal{I}_{({r'}-s')}]\cap V_{\frac{n}{2}}\cap \overline{\mathcal{I}_{2^{n-1}}}|$, where $\overline{A}$ denotes the complement of $A$.
So let us find $|C^p[\mathcal{I}_{({r'}-s')}]\cap V_{\frac{n}{2}}\cap\overline{\mathcal{I}_{2^{n-1}}}|$.
Clearly $\mathcal{I}_{(r'-s')}=\left(\bigcup\limits_{0\leq i\leq p-\frac{n}{2}} V_{i}\right)\cup\left\{a\in V_{p-\frac{n}{2}+1}: 1\in a\ \textnormal{and $a$ has at least one of the elements from }\{2,3,\ldots,n-p+1\}\right\}$.
Let $b\in V_{\frac{n}{2}}\cap\overline{\mathcal{I}_{2^{n-1}}}$ such that $\{2,3,\ldots,n-p+1\}\subseteq b$, then for any $a\in \mathcal{I}_{(r'-s')}\cap V_{(p-\frac{n}{2}+1)}$, $a\cap b\neq\emptyset$ and hence $|a\triangle b|\leq (p-\frac{n}{2})+(\frac{n}{2}-1)=p-1$.
Therefore $|C^p[\mathcal{I}_{(r'-s')}]\cap V_{\frac{n}{2}}\cap\overline{\mathcal{I}_{2^{n-1}}}|\geq \binom{(n-1)-(n-p)}{\frac{n}{2}-(n-p)}=\binom{p-1}{p-\frac{n}{2}}=s'$.
Let us consider the $(s'+1)^{\textnormal{th}}$ element in $V_{\frac{n}{2}}\cap\overline{\mathcal{I}_{2^{n-1}}}$, namely $\{2,3,\ldots,n-p,$ $n-p+2,n-p+3,\ldots,\frac{n}{2}+2\}$ and  $\left\{1,n-p+1,\frac{n}{2}+3,\frac{n}{2}+4,\ldots,p+1\right\}\in \mathcal{I}_{(r'-s')}\cap V_{p-\frac{n}{2}+1}$. 
Since the symmetric difference between these two sets is $p+1$, $\{2,3,\ldots,n-p,n-p+2,n-p+3,\ldots,\frac{n}{2}+2\}\notin C^p[\mathcal{I}_{(r'-s')}]$ and hence  by using Lemma \ref{simplicialcommonngbr}, we see that $|C^p[\mathcal{I}_{(r'-s')}]\cap V_{\frac{n}{2}}\cap\overline{\mathcal{I}_{2^{n-1}}}|=s'$.
Therefore, $|C^p(\mathcal{I}_{(r'-s')})|=2^{n-1}-(r'-s')+s'=2^{n-1}-(r'-2s')$.
\end{proof}
\begin{cor}\label{i2l}
For $n$ being odd and $\frac{n+1}{2}\leq p\leq n-2$, if $r=\sum\limits_{i=0}^{p-\frac{n-1}{2}}{\binom{n}{i}}$ and $s=\binom{p}{p-\frac{n-1}{2}}$, then $|C^p(\mathcal{I}_{(r-s+1)})|<2^{n-1}-(r-2s+1)$.
For $n$ being even and $\frac{n}{2}+1\leq p\leq n-2$, if $r'=\sum\limits_{i=0}^{p-\frac{n}{2}}{\binom{n}{i}}+ \binom{n-1}{p-\frac{n}{2}}$ and $s'=\binom{p-1}{p-\frac{n}{2}}$, then $|C^p(\mathcal{I}_{(r'-s'+1)})|<2^{n-1}-(r'-2s'+1)$.
\end{cor}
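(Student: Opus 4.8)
The plan is to exploit the fact that, although passing from $\mathcal{I}_{r-s}$ to $\mathcal{I}_{r}$ forces $C^{p}[\cdot]$ to shed exactly $s$ vertices (this is the content of Theorem \ref{x-k}), adjoining just the \emph{first} of those $s$ vertices to $\mathcal{I}_{r-s}$ already destroys at least one common neighbour, and that single extra loss is what pushes $|C^{p}(\cdot)|$ strictly below $2^{n-1}-(r-2s+1)$. Fix $n$ odd and $\frac{n+1}{2}\le p\le n-2$. Since the simplicial order restricted to a fixed layer $V_{k}$ coincides with the lexicographic order, the lex-last $s=\binom{p}{p-\frac{n-1}{2}}$ sets of $V_{p-\frac{n-1}{2}}$ are precisely the $(p-\frac{n-1}{2})$-subsets of $\{n-p+1,\dots,n\}$, so the unique vertex of $\mathcal{I}_{r-s+1}\setminus\mathcal{I}_{r-s}$ is $a^{\ast}=\{n-p+1,n-p+2,\dots,\tfrac{n+1}{2}\}$. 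Likewise the first $s$ sets of $V_{\frac{n+1}{2}}$ are exactly those containing $\{1,\dots,n-p\}$ (these are the common neighbours exhibited in the proof of Theorem \ref{x-k}); let $b^{\ast}=\{1,\dots,n-p\}\cup\{\tfrac{3n+1}{2}-p,\dots,n\}\in V_{\frac{n+1}{2}}$ be the lex-last of them, so that $b^{\ast}\in C^{p}[\mathcal{I}_{r-s}]$.

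The crux is then a one-line computation: $a^{\ast}$ and $b^{\ast}$ are disjoint, because $n>p$ forces $\tfrac{3n+1}{2}-p>\tfrac{n+1}{2}$, hence $|a^{\ast}\triangle b^{\ast}|=|a^{\ast}|+|b^{\ast}|=(p-\tfrac{n-1}{2})+\tfrac{n+1}{2}=p+1$. Therefore $b^{\ast}\notin C^{p}[\mathcal{I}_{r-s+1}]$, while $C^{p}[\mathcal{I}_{r-s+1}]\subseteq C^{p}[\mathcal{I}_{r-s}]$ by monotonicity of $C^{p}[\cdot]$ under inclusion; combined with $|C^{p}[\mathcal{I}_{r-s}]|=2^{n-1}+s$ (Theorem \ref{x-k} together with $\mathcal{I}_{r-s}\subseteq C^{p}[\mathcal{I}_{r-s}]$) this gives $|C^{p}[\mathcal{I}_{r-s+1}]|\le 2^{n-1}+s-1$. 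One also checks $\mathcal{I}_{r-s+1}\subseteq C^{p}[\mathcal{I}_{r-s+1}]$ — every set in $\mathcal{I}_{r-s+1}$ has size at most $p-\tfrac{n-1}{2}$, so any two differ in at most $2p-n+1\le p$ coordinates, using $p\le n-2$. Hence $|C^{p}(\mathcal{I}_{r-s+1})|=|C^{p}[\mathcal{I}_{r-s+1}]|-(r-s+1)\le 2^{n-1}+s-1-(r-s+1)=2^{n-1}-(r-2s+1)-1<2^{n-1}-(r-2s+1)$.

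For $n$ even the argument is identical once $V_{\frac{n+1}{2}}$ is replaced by the size-$\tfrac n2$ sets lying in $\overline{\mathcal{I}_{2^{n-1}}}$, i.e. those not containing the coordinate $1$: here $a^{\ast}=\{1,n-p+2,n-p+3,\dots,\tfrac n2+1\}$, $b^{\ast}=\{2,3,\dots,n-p+1\}\cup\{\tfrac{3n}{2}-p+1,\dots,n\}$, once again $a^{\ast}\cap b^{\ast}=\emptyset$ (using $n>p$, and noting $1\in a^{\ast}$ but $1\notin b^{\ast}$) so that $|a^{\ast}\triangle b^{\ast}|=p+1$, and the same bookkeeping with $r',s'$ in place of $r,s$ (now the constraint $p\le n-2$ is exactly what makes $2p-n+2\le p$) yields $|C^{p}(\mathcal{I}_{r'-s'+1})|\le 2^{n-1}-(r'-2s'+1)-1<2^{n-1}-(r'-2s'+1)$. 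The only step that needs genuine care is the identification of $a^{\ast}$ and $b^{\ast}$ as the appropriate vertices of the simplicial order — pinning down the $(r-s+1)$-st vertex and the $s$-th vertex of the relevant layer — and the verification that they are disjoint; everything after that is monotonicity of $C^{p}[\cdot]$, Lemma \ref{simplicialcommonngbr}, and arithmetic. I expect that identification, together with the extra bookkeeping around the coordinate $1$ in the even case, to be the main (though mild) obstacle.
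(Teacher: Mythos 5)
Your proposal is correct and follows essentially the same route as the paper: identify the unique new vertex $\{n-p+1,\ldots,\frac{n+1}{2}\}$ (resp.\ $\{1,n-p+2,\ldots,\frac{n}{2}+1\}$) of $\mathcal{I}_{r-s+1}\setminus\mathcal{I}_{r-s}$, exhibit one common neighbour of $\mathcal{I}_{r-s}$ in the top layer that is disjoint from it and hence at symmetric difference exactly $p+1$, and conclude that at least one common neighbour is lost while only one clique vertex is gained. The only cosmetic differences are your choice of witness ($\{1,\ldots,n-p\}\cup\{\frac{3n+1}{2}-p,\ldots,n\}$ versus the paper's $\{1,\ldots,n-p\}\cup\{\frac{n+3}{2},\ldots,p+1\}$, both of which work) and your bookkeeping via $|C^p(\cdot)|=|C^p[\cdot]|-|\mathcal{I}|$ instead of the paper's decomposition through $\mathcal{I}_{2^{n-1}}$.
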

\begin{proof}
For $n$ being odd, by arguments similar to that given in Theorem \ref{x-k}, we can see that $|C^p(\mathcal{I}_{(r-s+1)})|=2^{n-1}-(r-s+1)+|C^p[\mathcal{I}_{(r-s+1)}]\cap\overline{\mathcal{I}_{2^{n-1}}}|$ and $|C^p[\mathcal{I}_{(r-s)}]\cap\overline{\mathcal{I}_{2^{n-1}}}|=s$.
One can observe that $\mathcal{I}_{(r-s+1)}\backslash \mathcal{I}_{(r-s)}=\{\{n-p+1,n-p+2,\ldots,\frac{n+1}{2}\}\}$.
So $C^p[\mathcal{I}_{(r-s+1)}]\cap\overline{\mathcal{I}_{2^{n-1}}}$ will not contain $\{1,2,\ldots,n-p,\frac{n+3}{2},\frac{n+5}{2},\ldots,p+1\}$ but contained in $C^p[\mathcal{I}_{(r-s)}]\cap\overline{\mathcal{I}_{2^{n-1}}}$.
Therefore, $|C^p[\mathcal{I}_{(r-s+1)}]\cap\overline{\mathcal{I}_{2^{n-1}}}|<s$ and $|C^p(\mathcal{I}_{(r-s+1)})|<2^{n-1}-(r-s+1)+s=2^{n-1}-(r-2s+1)$.
Similarly, for $n$ being even, one can prove $|C^p(\mathcal{I}_{(r'-s'+1)})|<2^{n-1}-(r'-2s'+1)$.
\end{proof}
\section{Bounds for the b-chromatic number of some powers of hypercubes}\label{bcolorhypercube}
Let us start this section by discussing a relationship between the b-chromatic number and the number of common neighbors of a clique in powers of hypercubes.
\begin{lem}\label{limp2l}
For $n,p,\ell \in\mathbb{N}$ with $\ell\leq2^{n-1}$, if $b(Q_n^p)\geq2^{n-1}+\ell$, then there exists a clique of size $2\ell$, say $A_{2\ell}$, such that $|C^p(A_{2\ell})|\geq2^{n-1}-\ell$.
\end{lem}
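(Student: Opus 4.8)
The plan is to start from a b-coloring of $Q_n^p$ using $2^{n-1}+\ell$ colors and extract from it a clique of size $2\ell$ with many common neighbors. By hypothesis such a b-coloring exists; fix it, together with a color dominating vertex $v_1,v_2,\ldots,v_{2^{n-1}+\ell}$ for each of the $2^{n-1}+\ell$ color classes. Each $v_j$ has, by definition, a neighbor in every other color class, so $\deg_{Q_n^p}(v_j)\ge 2^{n-1}+\ell-1$. I would then invoke the clique structure of $Q_n^p$: since $\lfloor n/2\rfloor<p<n-1$ forces $\omega(Q_n^p)=2^{n-1}$ (this is the odd case $\omega(Q_n^p)=2\sum_{i=0}^{(p-1)/2}\binom{n-1}{i}=2^{n-1}$ when $n$ is odd, and similarly $2^{n-1}$ when $n$ is even, by Theorem \ref{hypercubeboundold}(i) — more precisely one checks $\omega(Q_n^p)=2^{n-1}$ for $p$ in this range), a set of $2^{n-1}+\ell$ pairwise-adjacent vertices cannot exist, but the color dominating vertices need not be pairwise adjacent; what matters is that the closed neighborhood $N[v_j]$ in $Q_n^p$ hits every color class.

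The key combinatorial step is a counting/averaging argument on the bipartite-type incidence between color classes and the "large" color dominating vertices. I would partition $V(Q_n^p)$ by the $2^{n-1}+\ell$ color classes and observe that each class has at least one vertex; since $|V(Q_n^p)|=2^n=2\cdot 2^{n-1}$, at most $2^{n-1}-\ell$ of the classes can have size $\ge 2$, hence at least $2\ell$ color classes are singletons. Let $S$ be the set of these $\ge 2\ell$ singleton vertices. Pick any $2\ell$ of them. The crucial claim is that these $2\ell$ singletons actually form a clique $A_{2\ell}$ in $Q_n^p$: a singleton color class $\{u\}$ must itself be a color dominating vertex, so $u$ is adjacent to a vertex of every other class; in particular, for any other singleton $\{u'\}$, the unique vertex of $u'$'s class adjacent to $u$ is $u'$ itself, forcing $uu'\in E(Q_n^p)$. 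Thus the singleton vertices are pairwise adjacent, giving a clique; since $\omega(Q_n^p)=2^{n-1}\ge 2\ell$ this clique of size $2\ell$ is legitimate.

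Finally I would count common neighbors of $A_{2\ell}$. Every vertex $u\in A_{2\ell}$ is a color dominating vertex, hence adjacent in $Q_n^p$ to some vertex of each of the remaining $2^{n-1}+\ell-2\ell=2^{n-1}-\ell$ color classes that are not among the chosen $2\ell$ singletons; but for those classes the situation is reversed — a class $K$ outside $A_{2\ell}$ need not be a singleton, so a priori different $u$'s could use different representatives of $K$. The fix is to intersect over the clique: I want to show the $2^{n-1}-\ell$ "outside" classes each contribute a vertex that is simultaneously adjacent to all of $A_{2\ell}$. Here I would use that $C^p(A_{2\ell})$ together with $A_{2\ell}$ and these outside classes exhausts the $2^n$ vertices with the right multiplicities, or alternatively argue directly: consider any outside class $K$; since each of the $2\ell$ vertices of $A_{2\ell}$ is a c.d.v., each has a neighbor in $K$, and I would like one vertex of $K$ to serve all of them — this is where the real work lies. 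I expect this last step to be the main obstacle, and I anticipate it is handled by choosing $\ell$ of the singletons and their "twins": in $Q_n$ the map $x\mapsto [n]\setminus x$ (complementation) pairs up vertices at distance $n>p$, and by a parity/distance argument the singleton color dominating vertices come in complementary pairs, so $A_{2\ell}$ consists of $\ell$ complementary pairs; then a vertex adjacent (in $Q_n^p$) to both $x$ and $[n]\setminus x$ must lie in a narrow "middle band", and membership in that band for all $\ell$ pairs simultaneously pins down exactly the $2^{n-1}-\ell$ vertices not yet accounted for. Quantitatively, $|V(Q_n^p)| = |A_{2\ell}| + |C^p(A_{2\ell})| + |\text{rest}|$ and bounding $|\text{rest}|\le \ell$ (one excess vertex per non-singleton outside class, and there are at most $\ell$ of those since $2^{n-1}-\ell$ outside classes split the $2^n-2^{n-1}-?$ remaining vertices) yields $|C^p(A_{2\ell})|\ge 2^n - 2\ell - \ell \cdot(\text{something})$; I would tune this accounting to land exactly at $2^{n-1}-\ell$, which is the stated bound.
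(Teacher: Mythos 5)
You have the first two-thirds of the argument exactly right and essentially as the paper does it: the counting step (at most $2^{n-1}-\ell$ classes of size $\ge 2$, hence at least $2\ell$ singleton classes) and the observation that the singleton vertices, being forced color dominating vertices of their own classes, are pairwise adjacent and so form a clique $A_{2\ell}$. But the final step — where you yourself say ``this is where the real work lies'' — contains a genuine gap, and the speculative repair you sketch (singletons coming in complementary pairs $x$, $[n]\setminus x$, a ``middle band'' argument, and an accounting of $|\mathrm{rest}|$) is not the right idea and would not go through; there is no reason the singleton classes pair up under complementation.

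The missing idea is simply to reverse the direction in which you read the domination condition. You are trying to show that each of the $2^{n-1}-\ell$ outside classes $K$ contains a vertex adjacent to all of $A_{2\ell}$ by starting from the vertices $u\in A_{2\ell}$ and worrying that different $u$'s pick different representatives of $K$. Instead, start from $K$: it has a color dominating vertex $w_K$, and $w_K$ must have a neighbor in \emph{every} other color class, in particular in each of the $2\ell$ singleton classes $\{u\}$ with $u\in A_{2\ell}$ — and the only available neighbor there is $u$ itself. Hence $w_K$ is adjacent to every vertex of $A_{2\ell}$, i.e., $w_K\in C^p(A_{2\ell})$ (it is not in $A_{2\ell}$ since it lies in a different color class). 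The $w_K$ over the $2^{n-1}+\ell-2\ell=2^{n-1}-\ell$ outside classes are distinct, which gives $|C^p(A_{2\ell})|\ge 2^{n-1}-\ell$ immediately. This is exactly the same trick you already used to prove the singletons form a clique; you just need to apply it once more with the roles exchanged.
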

\begin{proof}
If $b(Q_n^p)\geq2^{n-1}+\ell$, then there exist at least $2\ell$ color classes which are singletons.
Suppose not, if there exist at most $2\ell-1$ color classes which are singletons, then there exist at least $2^{n-1}-\ell+1$ color classes which are not singletons.
 This in turn will imply that  $2^n\geq2\ell-1+2(2^{n-1}-\ell+1)=2\ell-1+2^{n}-2\ell+2=2^n+1$, a contradiction.
Now, let $A_{2\ell}$ denote a subset of $2\ell$ vertices from the singleton color classes.
Since in a b-coloring, every color class must contain a color dominating vertex and the color dominating vertices of the color classes other than the singleton color classes corresponding to $A_{2\ell}$ will be  common neighbors  to the vertices in $A_{2\ell}$. 
Therefore, $|C^p(A_{2\ell})|\geq2^{n-1}+\ell-2\ell=2^{n-1}-\ell$.
\end{proof}

As a simple consequence of Corollary \ref{open} and Lemma \ref{limp2l}, we can obtain an upper bound for the b-chromatic number of some powers of hypercubes which is better than the upper bound given in Theorem \ref{hypercubeboundold}.
\begin{thm}\label{roughbound}
	For any odd $n\geq5$ and $\frac{n+1}{2}\leq p\leq n-2$, if $r=\sum\limits_{i=0}^{p-\frac{n-1}{2}}{\binom{n}{i}}$
	, then $b(Q_n^p)\leq2^{n-1}+\left\lceil\frac{r}{2}\right\rceil-1$.
	For any even $n\geq6$ and $\frac{n}{2}+1\leq p\leq n-2$, if $r'=\sum\limits_{i=0}^{p-\frac{n}{2}}{\binom{n}{i}}+ \binom{n-1}{p-\frac{n}{2}}$
	, then $b(Q_n^p)\leq2^{n-1}+\left\lceil\frac{r'}{2}\right\rceil-1$.
\end{thm}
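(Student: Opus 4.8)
The plan is to argue by contradiction, feeding Lemma~\ref{limp2l} into Corollary~\ref{open}. If $b(Q_n^p)$ were large, Lemma~\ref{limp2l} would produce a large clique with many common neighbours, Corollary~\ref{open} would transfer this to an initial segment, and I would then show that this initial segment has strictly fewer common neighbours than required. I would write out the odd case in full; the even case is identical after replacing $r,\mathcal{I}_r,\mathcal{I}_{r+1}$ by $r',\mathcal{I}_{r'},\mathcal{I}_{r'+1}$ throughout.

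So, suppose $n$ is odd, $\frac{n+1}{2}\leq p\leq n-2$, put $\ell=\left\lceil\frac{r}{2}\right\rceil$, and assume for contradiction that $b(Q_n^p)\geq 2^{n-1}+\ell$. From $1\leq p-\frac{n-1}{2}\leq\frac{n-3}{2}$ one gets $1\leq r\leq\sum_{i=0}^{(n-1)/2}\binom{n}{i}=2^{n-1}$, hence $1\leq\ell\leq 2^{n-1}$ and $2\ell\in\{r,r+1\}$. Lemma~\ref{limp2l} then yields a clique $A_{2\ell}$ of $Q_n^p$ with $|A_{2\ell}|=2\ell$ and $|C^p(A_{2\ell})|\geq 2^{n-1}-\ell$; since $A_{2\ell}$ is a clique of $Q_n^p$, every pair $x,y\in A_{2\ell}$ satisfies $|x\triangle y|\leq p$, so Corollary~\ref{open} applies and gives $|C^p(\mathcal{I}_{2\ell})|\geq|C^p(A_{2\ell})|\geq 2^{n-1}-\ell$.

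The core of the proof is then to bound $|C^p(\mathcal{I}_{2\ell})|$ from above. First I would check that $\mathcal{I}_{2\ell}$ is itself a clique of $Q_n^p$; since $2\ell\leq r+1$ it is enough to do this for $\mathcal{I}_{r+1}=\mathcal{I}_r\cup\{B\}$, where $\mathcal{I}_r=\{x\in 2^{[n]}:|x|\leq p-\frac{n-1}{2}\}$ and $B=\{1,2,\dots,p-\frac{n-1}{2}+1\}$. A short computation shows that the largest symmetric difference of two members of $\mathcal{I}_{r+1}$ is $|B|+\bigl(p-\frac{n-1}{2}\bigr)=2p-n+2$ (attained by $B$ and a set of size $p-\frac{n-1}{2}$ disjoint from it, which fits because $p\leq n-1$), and $2p-n+2\leq p$ is exactly the hypothesis $p\leq n-2$. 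Consequently $\mathcal{I}_{2\ell}\subseteq C^p[\mathcal{I}_{2\ell}]$, so $|C^p(\mathcal{I}_{2\ell})|=|C^p[\mathcal{I}_{2\ell}]|-2\ell$. Next, $2\ell\geq r$ gives $\mathcal{I}_r\subseteq\mathcal{I}_{2\ell}$, hence $C^p[\mathcal{I}_{2\ell}]\subseteq C^p[\mathcal{I}_r]$, and since the proof of Theorem~\ref{x-k} establishes $C^p[\mathcal{I}_r]=\mathcal{I}_{2^{n-1}}$, this forces $|C^p[\mathcal{I}_{2\ell}]|\leq 2^{n-1}$ and therefore $|C^p(\mathcal{I}_{2\ell})|\leq 2^{n-1}-2\ell$.

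Putting the two estimates together gives $2^{n-1}-\ell\leq|C^p(\mathcal{I}_{2\ell})|\leq 2^{n-1}-2\ell$, i.e.\ $\ell\leq 0$, which contradicts $\ell\geq 1$; hence $b(Q_n^p)\leq 2^{n-1}+\left\lceil\frac{r}{2}\right\rceil-1$. The even case runs identically, using that $\mathcal{I}_{r'+1}$ is a clique of $Q_n^p$ (once more precisely because $p\leq n-2$) and that $C^p[\mathcal{I}_{r'}]=\mathcal{I}_{2^{n-1}}$ by Theorem~\ref{x-k}. I expect the only genuinely non-routine step to be this clique check for $\mathcal{I}_{r+1}$ and $\mathcal{I}_{r'+1}$ --- verifying that adjoining one set of the next size to the initial segment keeps all pairwise symmetric differences at most $p$ --- which is the place where the range $p\leq n-2$ is actually needed; everything else is bookkeeping with the monotonicity of $C^p[\,\cdot\,]$ along initial segments.
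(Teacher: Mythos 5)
Your proposal is correct and follows essentially the same route as the paper: contradiction via Lemma~\ref{limp2l}, transfer to an initial segment via Corollary~\ref{open}, and the identity $C^p[\mathcal{I}_r]=\mathcal{I}_{2^{n-1}}$ from Theorem~\ref{x-k}. The only (harmless) difference is in the endgame: you verify that $\mathcal{I}_{2\ell}$ is a clique to get $|C^p(\mathcal{I}_{2\ell})|\leq 2^{n-1}-2\ell$, whereas the paper simply uses the monotonicity $|C^p(\mathcal{I}_{2\ell})|\leq|C^p(\mathcal{I}_{r})|=2^{n-1}-r$, reaching the same contradiction.
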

\begin{proof}
	Let us start with $n$ being odd and $n\geq 5$.
	Suppose $b(Q_n^p)\geq2^{n-1}+\left\lceil\frac{r}{2}\right\rceil$, then by using Lemma \ref{limp2l}, there exists a clique of size $2\left\lceil\frac{r}{2}\right\rceil$, say $A_{2\left\lceil\frac{r}{2}\right\rceil}$ such that $\left|C^p\left(A_{2\left\lceil\frac{r}{2}\right\rceil}\right)\right|\geq2^{n-1}-\left\lceil\frac{r}{2}\right\rceil$.
	By using Corollary \ref{open}, $\left|C^p\left(I_{2\left\lceil\frac{r}{2}\right\rceil}\right)\right|\geq\left|C^p\left(A_{2\left\lceil\frac{r}{2}\right\rceil}\right)\right|\geq 2^{n-1}-\left\lceil\frac{r}{2}\right\rceil$.
	Since $2\left\lceil\frac{r}{2}\right\rceil\geq r$, we can see that $|C^p(\mathcal{I}_{2\left\lceil\frac{r}{2}\right\rceil})|\leq|C^p(\mathcal{I}_r)|$.
	As discussed in Theorem \ref{x-k}, $C^p[\mathcal{I}_r]=\mathcal{I}_{2^{n-1}}$ and hence $|C^p(\mathcal{I}_r)|=2^{n-1}-r$.
	Therefore $2^{n-1}-\left\lceil\frac{r}{2}\right\rceil\leq\left|C^p\left(I_{2\left\lceil\frac{r}{2}\right\rceil}\right)\right|\leq|C^p(\mathcal{I}_r)|=2^{n-1}-r$, a contradiction.
	For $n$ being even, here also  $C^p[\mathcal{I}_{r'}]=\mathcal{I}_{2^{n-1}}$.
	So by using similar technique, one can prove that $b(Q_n^p)\leq2^{n-1}+\left\lceil\frac{r'}{2}\right\rceil-1$.
\end{proof}
Natural question one would get is ``Whether the upper bound given in Theorem \ref{roughbound} is optimal?" The answer is no. We are going to establish an upper bound which is even better than the one given in Theorem \ref{roughbound}. This is going to be slightly complicated and for doing this, we will need to discuss a few special cases separately. This is given in Lemmas \ref{parteg}-\ref{r>3s}.
\begin{lem}\label{parteg}
$b(Q_7^5)\leq2^6+9$
\end{lem}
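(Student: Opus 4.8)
The plan is to argue by contradiction, assuming $b(Q_7^5)\geq 2^6+10$, and to play Lemma \ref{limp2l} off against Corollary \ref{i2l}, using Corollary \ref{open} as the bridge between an arbitrary clique and an initial segment. First I would record the relevant parameters: $n=7$ is odd and $p=5$ lies in the range $\frac{n+1}{2}=4\leq p\leq n-2=5$, so Theorem \ref{x-k} and Corollary \ref{i2l} apply with $r=\sum_{i=0}^{p-\frac{n-1}{2}}\binom{7}{i}=\binom{7}{0}+\binom{7}{1}+\binom{7}{2}=29$ and $s=\binom{p}{p-\frac{n-1}{2}}=\binom{5}{2}=10$. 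In particular $r-s+1=20$ and $r-2s+1=10$.

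Assume $b(Q_7^5)\geq 2^6+10$. Since $\ell:=10\leq 2^6=2^{n-1}$, Lemma \ref{limp2l} provides a clique $A_{20}$ of $Q_7^5$ of size $20$ with $|C^5(A_{20})|\geq 2^6-10=54$. Because $A_{20}$ is a clique in $Q_7^5$, every pair $x,y\in A_{20}$ satisfies $|x\triangle y|\leq 5=p$, which is exactly the hypothesis of Corollary \ref{open}; hence $|C^5(\mathcal{I}_{20})|\geq|C^5(A_{20})|\geq 54$.

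On the other hand, Corollary \ref{i2l} (odd case, with the values of $r$ and $s$ above) gives $|C^5(\mathcal{I}_{r-s+1})|=|C^5(\mathcal{I}_{20})|<2^6-(r-2s+1)=64-10=54$. This contradicts the lower bound from the previous step, so the assumption fails and $b(Q_7^5)\leq 2^6+9$.

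I do not expect a genuine obstacle here: the argument is a direct chaining of results already established. The only points that need attention are verifying that $\ell=10$ meets the numerical hypothesis $\ell\leq 2^{n-1}$ of Lemma \ref{limp2l}, that the clique $A_{20}$ automatically satisfies the pairwise symmetric-difference condition required by Corollary \ref{open}, and that the index $2\ell=20$ coincides with $r-s+1$ so that Corollary \ref{i2l} can be invoked verbatim.
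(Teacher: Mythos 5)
Your proof is correct, and it takes a genuinely different route from the paper's. The paper proves this lemma by brute force: it writes out $\mathcal{I}_{20}$ explicitly, determines $C^5[\mathcal{I}_{20}]$ to be all of $V_0\cup V_1\cup V_2\cup V_3$ together with seven specific sets of $V_4$, and concludes $|C^5(\mathcal{I}_{20})|=2^6+7-20=51<54$. You instead reuse Corollary \ref{i2l}, noting that for $n=7$, $p=5$ one has $r=29$, $s=10$, so $r-s+1=20=2\ell$ and $r-2s+1=10=\ell$; the strict inequality $|C^5(\mathcal{I}_{20})|<2^{n-1}-(r-2s+1)=54$ then collides head-on with the bound $|C^5(\mathcal{I}_{20})|\geq 2^{n-1}-\ell=54$ coming from Lemma \ref{limp2l} and Corollary \ref{open}. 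All hypotheses check out: $\ell=10\leq 2^6$, a clique of $Q_7^5$ satisfies the pairwise condition $|x\triangle y|\leq 5$ needed for Corollary \ref{open}, and Corollary \ref{i2l} does not depend on this lemma, so there is no circularity. Your argument is in fact the general argument of Theorem \ref{newboundhypercube} carried out \emph{without} discarding the floor: the paper's relaxation $\lfloor\frac{r-s}{2}\rfloor\leq\frac{r-s}{2}$ turns the contradiction $9>10$ into the harmless $r<3s$ (here $29<30$), which is exactly why the paper must treat $(n,p)=(7,5)$ as a special case; keeping $\lfloor\frac{r-s}{2}\rfloor$ intact, as you do, recovers the case uniformly. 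What the paper's computation buys in exchange is the sharper explicit value $|C^5(\mathcal{I}_{20})|=51$, but for the stated bound your softer argument suffices.
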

\begin{proof}
Suppose $b(Q_7^5)\geq2^6+10$, by Lemma \ref{limp2l}, there exists a clique of size 20, say, $A_{20}$ such that $|C^5(A_{20})|\geq2^6-10$.
By Corollary \ref{open}, $|C^5(\mathcal{I}_{20})|\geq|C^5(A_{20})|\geq2^6-10$.
Let us find $C^5[\mathcal{I}_{20}]$ in $Q_7^5$.
$\mathcal{I}_{20}=\{\emptyset,\{1\},\{2\},\ldots,\{7\},\{1,2\},\{1,3\},\ldots,\{1,7\},\{2,3\},\{2,4\},\ldots,\{2,7\},\{3,4\}\}$.
So $C^5[\mathcal{I}_{20}]$ will contain all the vertices in $V_0,V_1,V_2,V_3$ and $\{1,2,3,4\},\{1,2,3,5\},\{1,2,3,6\},\linebreak\{1,2,3,7\},\{1,2,4,5\},\{1,2,4,6\},\{1,2,4,7\}$ in $V_4$.
Therefore, $|C^5(\mathcal{I}_{20})|=2^6+7-20=2^6-13<2^6-10$, a contradiction.
%
%
%
\end{proof}
\begin{lem}\label{q=1}
For odd $n\geq5$, $b(Q_n^{\frac{n+1}{2}})\leq2^{n-1}+\left\lfloor\frac{n+1}{4}\right\rfloor$.
\end{lem}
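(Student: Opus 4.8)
The plan is to run the same contradiction scheme used in Lemma \ref{parteg} and Theorem \ref{roughbound}, but with the sharper count from Theorem \ref{x-k} in hand. Write $p=\frac{n+1}{2}$, so that $p-\frac{n-1}{2}=1$ and $r=\sum_{i=0}^{1}\binom{n}{i}=n+1$, $s=\binom{p}{1}=\frac{n+1}{2}$. Then $r-s=\frac{n+1}{2}$ and $r-2s=0$, so Theorem \ref{x-k} gives $|C^p(\mathcal{I}_{(r-s)})|=|C^p(\mathcal{I}_{\frac{n+1}{2}})|=2^{n-1}$, and Corollary \ref{i2l} gives $|C^p(\mathcal{I}_{(r-s+1)})|<2^{n-1}-(r-2s+1)=2^{n-1}-1$. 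Set $\ell_0=\left\lfloor\frac{n+1}{4}\right\rfloor+1$; I would suppose for contradiction that $b(Q_n^p)\geq 2^{n-1}+\ell_0$.

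First I would invoke Lemma \ref{limp2l}: under the assumption there is a clique $A_{2\ell_0}$ of size $2\ell_0$ in $Q_n^p$ with $|C^p(A_{2\ell_0})|\geq 2^{n-1}-\ell_0$. Since $A_{2\ell_0}$ is a clique in $Q_n^p$, every pair $x,y\in A_{2\ell_0}$ satisfies $|x\triangle y|\leq p$, so Corollary \ref{open} applies and yields $|C^p(\mathcal{I}_{2\ell_0})|\geq|C^p(A_{2\ell_0})|\geq 2^{n-1}-\ell_0$. The next step is purely arithmetic: compare $2\ell_0=2\left\lfloor\frac{n+1}{4}\right\rfloor+2$ with the threshold $r-s+1=\frac{n+1}{2}+1=\frac{n+3}{2}$. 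One checks $2\left\lfloor\frac{n+1}{4}\right\rfloor+2\geq\frac{n+3}{2}$ for all odd $n\geq 5$ (splitting into $n\equiv 1$ and $n\equiv 3\pmod 4$; equality essentially holds in the tightest case). Hence $2\ell_0\geq r-s+1$, and since $t\mapsto|C^p(\mathcal{I}_t)|$ is non-increasing along initial segments (a consequence of Theorem \ref{close}, exactly as used in Theorem \ref{roughbound}), we get $|C^p(\mathcal{I}_{2\ell_0})|\leq|C^p(\mathcal{I}_{(r-s+1)})|<2^{n-1}-1$.

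Combining the two bounds gives $2^{n-1}-\ell_0\leq|C^p(\mathcal{I}_{2\ell_0})|<2^{n-1}-1$, i.e. $\ell_0>1$, which is consistent and therefore \emph{not} yet a contradiction — so the crude comparison needs to be refined. The real work, and the step I expect to be the main obstacle, is to track $|C^p(\mathcal{I}_{2\ell_0})|$ precisely rather than just bounding it by $|C^p(\mathcal{I}_{(r-s+1)})|$: one must count how many further sets of $V_{\frac{n+1}{2}}$ leave $C^p[\mathcal{I}_{t}]$ as $t$ increases from $r-s$ up to $2\ell_0$, using that deleting the set $\{1,2,\ldots,\tfrac{n+1}{2}\}$-type vertices from $C^p$ requires removing blocks of $\mathcal{I}_t$ of controlled size (the same $|U|$-counting that appears in the proof of Theorem \ref{close} and in Corollary \ref{i2l}). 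Carrying this out should show $|C^p(\mathcal{I}_{2\ell_0})|\leq 2^{n-1}-\ell_0-1$ (the $-1$ coming from the fact that $2\ell_0$ strictly exceeds $r-s$ by at least one, combined with $r-2s=0$), contradicting $|C^p(\mathcal{I}_{2\ell_0})|\geq 2^{n-1}-\ell_0$. I would treat the two residue classes $n\equiv 1\pmod 4$ and $n\equiv 3\pmod 4$ separately in this final count, since the floor in $\left\lfloor\frac{n+1}{4}\right\rfloor$ behaves differently, and verify the small case $n=5$ (and if needed $n=7$, which overlaps Lemma \ref{parteg}) by the explicit initial-segment computation as a sanity check.
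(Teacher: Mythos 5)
Your scaffolding matches the paper's: assume $b(Q_n^{(n+1)/2})\geq 2^{n-1}+\ell_0$ with $\ell_0=\left\lfloor\frac{n+1}{4}\right\rfloor+1$, extract a clique of size $2\ell_0$ via Lemma \ref{limp2l}, pass to the initial segment via Corollary \ref{open}, and use the monotonicity of $t\mapsto|C^p(\mathcal{I}_t)|$ together with $2\ell_0\geq\frac{n+3}{2}$. But the decisive step is missing. You correctly observe that Corollary \ref{i2l} only gives $|C^p(\mathcal{I}_{(r-s+1)})|<2^{n-1}-1$ (since $r-2s=0$ when $p=\frac{n+1}{2}$), which yields merely $\ell_0>1$ and no contradiction; you then defer ``the real work'' to an unspecified refined count and assert that it ``should show'' $|C^p(\mathcal{I}_{2\ell_0})|\leq 2^{n-1}-\ell_0-1$. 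That inequality is never derived, and the mechanism you sketch for it does not visibly produce a bound depending on $\ell_0$. So, as written, the argument is incomplete at exactly the point where the lemma is decided.

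What closes the argument --- and what the paper does --- is a direct evaluation of $|C^p(\mathcal{I}_{\frac{n+3}{2}})|$ in place of the appeal to Corollary \ref{i2l}. Here $\mathcal{I}_{\frac{n+3}{2}}=\{\emptyset,\{1\},\ldots,\{\frac{n+1}{2}\}\}$, and a set $y$ lies in $C^{\frac{n+1}{2}}[\mathcal{I}_{\frac{n+3}{2}}]$ if and only if $|y|\leq\frac{n-1}{2}$ or $y=\{1,2,\ldots,\frac{n+1}{2}\}$ (any other $y$ of size $\frac{n+1}{2}$ misses some $i\leq\frac{n+1}{2}$ and has symmetric difference $\frac{n+3}{2}>p$ with $\{i\}$). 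Hence $|C^p[\mathcal{I}_{\frac{n+3}{2}}]|=2^{n-1}+1$ and $|C^p(\mathcal{I}_{\frac{n+3}{2}})|=2^{n-1}+1-\frac{n+3}{2}=2^{n-1}-\frac{n+1}{2}$. Chaining this with your inequalities gives $2^{n-1}-\ell_0\leq|C^p(\mathcal{I}_{2\ell_0})|\leq|C^p(\mathcal{I}_{\frac{n+3}{2}})|=2^{n-1}-\frac{n+1}{2}$, i.e.\ $\frac{n+1}{2}\leq\left\lfloor\frac{n+1}{4}\right\rfloor+1$, which forces $n\leq3$ and yields the contradiction for all odd $n\geq5$ at once, with no case split on $n\bmod 4$. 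Your plan is therefore salvageable, but this exact computation is the missing ingredient; Corollary \ref{i2l} cannot substitute for it at $p=\frac{n+1}{2}$ because its bound $2^{n-1}-(r-2s+1)$ degenerates there to $2^{n-1}-1$.
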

\begin{proof}
Suppose  $b(Q_n^{\frac{n+1}{2}})\geq2^{n-1}+\left\lfloor\frac{n+1}{4}\right\rfloor+1$, by using Lemma \ref{limp2l}, there exists a clique of size $2\left(\left\lfloor\frac{n+1}{4}\right\rfloor+1\right)$, say $A_{2\left(\left\lfloor\frac{n+1}{4}\right\rfloor+1\right)}$ such that $\left|C^{\frac{n+1}{2}}\left(A_{2\left(\left\lfloor\frac{n+1}{4}\right\rfloor+1\right)}\right)\right|\geq2^{n-1}-\left(\left\lfloor\frac{n+1}{4}\right\rfloor+1\right)$.
Since $2\left(\left\lfloor\frac{n+1}{4}\right\rfloor+1\right)\geq \frac{n+1}{2}+1$, $\left|C^{\frac{n+1}{2}}\left(\mathcal{I}_{{\frac{n+1}{2}}+1}\right)\right|\geq\left|C^{\frac{n+1}{2}}\left(\mathcal{I}_{2\left(\left\lfloor\frac{n+1}{4}\right\rfloor+1\right)}\right)\right|$.
One can easily observe that $\left|C^{\frac{n+1}{2}}\left(\mathcal{I}_{{\frac{n+1}{2}}+1}\right)\right|=2^{n-1}-\frac{n+1}{2}-1+1=2^{n-1}-\frac{n+1}{2}$.
By using Corollary \ref{open}, $\left|C^{\frac{n+1}{2}}\left(\mathcal{I}_{2\left(\left\lfloor\frac{n+1}{4}\right\rfloor+1\right)}\right)\right|
\geq\left|C^{\frac{n+1}{2}}\left(A_{2\left(\left\lfloor\frac{n+1}{4}\right\rfloor+1\right)}\right)\right|$.
Hence we see that, $2^{n-1}-\frac{n+1}{2}\geq2^{n-1}-\left(\left\lfloor\frac{n+1}{4}\right\rfloor+1\right)$ and this will imply that $n\leq3$, a contradiction.
\end{proof}
Before we proceed to our main result, let us prove an inequality which will be useful in proving Theorem \ref{newboundhypercube}.
\begin{lem}\label{r>3s}
For $n\geq9,$ and 
%
$3\leq q\leq \left\lceil\frac{n}{2}\right\rceil-2$, let $r=\sum\limits_{i=0}^{q}{\binom{n}{i}}$ and $s=\binom{q+\left\lfloor\frac{n}{2}\right\rfloor}{q}$, then $r\geq3s$.
\end{lem}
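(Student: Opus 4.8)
The plan is to first put the inequality in a parity-free form and then reduce it to a single one-parameter family. With $m=\lfloor n/2\rfloor$, the hypothesis $3\le q\le\lceil n/2\rceil-2$ is equivalent to ``$q\ge 3$ and $n\ge 2q+3$'' (so the stated $n\ge 9$ is automatic), and the claim is $R:=\sum_{i=0}^{q}\binom{n}{i}\ge 3\binom{q+m}{q}$. I would split on the parity of $n$. When $n=2m+1$ with $q\le m-2$, the monotonicity $\binom{2m+1}{i}\ge\binom{2m}{i}$ reduces everything to the even case $\sum_{i=0}^{q}\binom{2m}{i}\ge 3\binom{q+m}{q}$. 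When $n=2m+1$ with $q=m-1$ (this includes $n=9$), one has $R=2^{2m}-\binom{2m+1}{m}$, and substituting $\binom{2m+1}{m}=\frac{2m+1}{m+1}\binom{2m}{m}$ and $\binom{2m-1}{m-1}=\frac12\binom{2m}{m}$ turns the claim into $2(m+1)\,4^{m}\ge(7m+5)\binom{2m}{m}$; I would close this with the standard estimate $\binom{2m}{m}\le 4^{m}/\sqrt{3m+1}$ (a one-line induction, whose step reduces to the identity ``difference $=m\ge 0$'') together with the inequality $4(m+1)^{2}(3m+1)\ge(7m+5)^{2}$, i.e. $12m^{3}-21m^{2}-50m-21\ge 0$, valid for all $m\ge 4$.

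So the real work is in the even case: fix $q\ge 3$, set $R_m:=\sum_{i=0}^{q}\binom{2m}{i}$, and prove $R_m\ge 3\binom{q+m}{q}$ for every $m\ge q+2$. The device is monotonicity in $m$. By Pascal's rule $R_{m+1}=4R_m-2\binom{2m}{q}-\binom{2m+1}{q}$, and one computes $2\binom{2m}{q}+\binom{2m+1}{q}=\binom{2m}{q}\cdot\frac{6m+3-2q}{2m-q+1}$ and $R_m\ge\binom{2m}{q}+\binom{2m}{q-1}=\binom{2m}{q}\cdot\frac{2m+1}{2m-q+1}$. Feeding these in, the desired $R_{m+1}/R_m\ge\frac{q+m+1}{m+1}$ (the ratio of consecutive values of $\binom{q+m}{q}$) reduces, after cancelling $\binom{2m}{q}$ and clearing denominators, to $(2m+1)(3m+3-q)\ge(m+1)(6m+3-2q)$, whose two sides differ by exactly $q\ge 0$. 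Hence $R_m/\binom{q+m}{q}$ is nondecreasing in $m$, so it suffices to establish the base case $m=q+2$.

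For $m=q+2$, put $k=q+2\ge 5$. Using the symmetry of the $2k$-th binomial row, $\sum_{i=0}^{k-1}\binom{2k}{i}=\frac12\bigl(4^{k}-\binom{2k}{k}\bigr)$ and $\binom{2k}{k-1}=\frac{k}{k+1}\binom{2k}{k}$, one gets $R_{q+2}=\sum_{i=0}^{k-2}\binom{2k}{i}=\frac{4^{k}}{2}-\binom{2k}{k}\cdot\frac{3k+1}{2(k+1)}$, while $3\binom{2q+2}{q}=3\binom{2k-2}{k-2}=\binom{2k}{k}\cdot\frac{3(k-1)}{2(2k-1)}$. So $R_{q+2}\ge 3\binom{2q+2}{q}$ is equivalent to $4^{k}(k+1)(2k-1)\ge\binom{2k}{k}\,(9k^{2}-k-4)$, and applying $\binom{2k}{k}\le 4^{k}/\sqrt{3k+1}$ once more it suffices to verify $(k+1)^{2}(2k-1)^{2}(3k+1)\ge(9k^{2}-k-4)^{2}$; expanding, the difference equals $(k-5)\,(12k^{4}-5k^{3}-12k^{2}+2k+3)$, which is nonnegative for $k\ge 5$ since the quartic factor is obviously positive there.

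The main obstacle is exactly this even-case base step $R_{q+2}\ge 3\binom{2q+2}{q}$: it is the genuinely tight instance of the whole lemma (nearly an equality at $q=3$, $n=10$, where $3s=168$ and $r=176$), and it is the only place where the clean ratio manipulations stop working and one must invoke an absolute estimate on the central binomial coefficient together with a polynomial inequality. Everything else — the parity reduction, the $q=m-1$ odd case, and the monotonicity in $m$ — is routine algebra once the binomial identities above are written down.
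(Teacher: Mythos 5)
Your proof is correct, but it takes a genuinely different route from the paper's. The paper fixes $n$ and inducts on $q$: the base case $q=3$ is disposed of by bounding $\lfloor n/2\rfloor\le n/2$ and factoring a cubic in $n$ (with $n=9$ checked by hand), and the inductive step rests on the pointwise inequality $\binom{n}{t}\ge 3\binom{t+\lfloor n/2\rfloor-1}{t}$, proved by a factor-by-factor comparison that again reduces to a quadratic in $n$, after which Pascal's rule $\binom{t+\lfloor n/2\rfloor-1}{t-1}+\binom{t+\lfloor n/2\rfloor-1}{t}=\binom{t+\lfloor n/2\rfloor}{t}$ stitches the two pieces together. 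You instead fix $q$ and induct on $m=\lfloor n/2\rfloor$, showing via $R_{m+1}=4R_m-2\binom{2m}{q}-\binom{2m+1}{q}$ and the lower bound $R_m\ge\binom{2m+1}{q}$ that the ratio $R_m/\binom{q+m}{q}$ is nondecreasing (your reduction to ``LHS $-$ RHS $=q$'' checks out), which pushes all the difficulty into the single tight instance $n=2q+4$; that base case, and the extremal odd case $q=m-1$, you close with the estimate $\binom{2k}{k}\le 4^k/\sqrt{3k+1}$ and polynomial factorizations (I verified $(k+1)^2(2k-1)^2(3k+1)-(9k^2-k-4)^2=(k-5)(12k^4-5k^3-12k^2+2k+3)$ and the analogous $12m^3-21m^2-50m-21\ge0$ for $m\ge4$). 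What each buys: the paper's argument is entirely elementary (no square-root estimates on central binomial coefficients) and shorter to write down, but its two polynomial reductions obscure where the inequality is actually tight; your version isolates the near-equality case $q=3$, $n=10$ ($176$ vs.\ $168$) explicitly, proves the stronger structural fact that $r/s$ grows in $n$ for fixed $q$, and would generalize more readily if the constant $3$ were changed, at the cost of invoking an analytic bound and a parity case split.
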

\begin{proof}
Let us prove by induction on $q$.
For $q=3$, we have to prove that $1+n+\frac{n(n-1)}{2}+\frac{n(n-1)(n-2)}{6}\geq\frac{3\left(\left\lfloor\frac{n}{2}\right\rfloor+3\right)\left(\left\lfloor\frac{n}{2}\right\rfloor+2\right)\left(\left\lfloor\frac{n}{2}\right\rfloor+1\right)}{6}$.	
For $n=9$, $r=130$ which is greater than $3s=105$.
So, let us assume that $n\geq10$.

\noindent On contrary, suppose

\noindent$\begin{array}{rrl}
&1+n+\frac{n(n-1)}{2}+\frac{n(n-1)(n-2)}{6}&<\frac{3\left(\left\lfloor\frac{n}{2}\right\rfloor+3\right)\left(\left\lfloor\frac{n}{2}\right\rfloor+2\right)\left(\left\lfloor\frac{n}{2}\right\rfloor+1\right)}{6}\\
\Longrightarrow&6+5n+n^3&<3\left(\left\lfloor\frac{n}{2}\right\rfloor+3\right)\left(\left\lfloor\frac{n}{2}\right\rfloor+2\right)\left(\left\lfloor\frac{n}{2}\right\rfloor+1\right)\\
&&<3\left(\frac{n}{2}+3\right)\left(\frac{n}{2}+2\right)\left(\frac{n}{2}+1\right)\\
\Longrightarrow&5n^3-36n^2-92n-96&<0\\
\Longrightarrow&5n^3-36n^2-92n-480&<0\\
\Longrightarrow&(5n^2+14n+48)(n-10)&<0\\
\end{array}$

Hence $n<10$, a contradiction. Therefore the result is true for $q=3$.
Now, let us assume that the result is true for $t-1$, that is, for $3\leq t-1< \left\lceil\frac{n}{2}\right\rceil-2$, $\sum\limits_{i=0}^{t-1}{\binom{n}{i}}\geq3\binom{t+\left\lfloor\frac{n}{2}\right\rfloor-1}{t-1}$.
Let us prove the result for $t$,  
$4\leq t\leq\left\lceil\frac{n}{2}\right\rceil-2$. Let us start by considering $r$. 

\noindent$
r=\sum\limits_{i=0}^{t}{\binom{n}{i}}=\sum\limits_{i=0}^{t-1}{\binom{n}{i}}+\binom{n}{t}\geq3\binom{t+\left\lfloor\frac{n}{2}\right\rfloor-1}{t-1}+\binom{n}{t}$

\noindent Claim: $\binom{n}{t}\geq3\binom{t+\left\lfloor\frac{n}{2}\right\rfloor-1}{t}$

\noindent$\begin{array}{rrl}
&\textnormal{Suppose }\binom{n}{t}													&<3\binom{t+\left\lfloor\frac{n}{2}\right\rfloor-1}{t}\\
\Longrightarrow&\frac{n!}{t!(n-t)!}										&<3\left(\frac{(t+\left\lfloor\frac{n}{2}\right\rfloor-1)!}{t!\left(\left\lfloor\frac{n}{2}\right\rfloor-1\right)!}\right)\\
\Longrightarrow&\frac{n(n-1)\cdots(n-t+1)}{t!}				&<\frac{3\left(t+\left\lfloor\frac{n}{2}\right\rfloor-1\right)\left(t+\left\lfloor\frac{n}{2}\right\rfloor-2\right)\cdots\left(t+\left\lfloor\frac{n}{2}\right\rfloor-1-t+4\right)\left(t+\left\lfloor\frac{n}{2}\right\rfloor-1-t+3\right)\left(t+\left\lfloor\frac{n}{2}\right\rfloor-1-t+2\right)\left(t+\left\lfloor\frac{n}{2}\right\rfloor-1-t+1\right)}{t!}\\
\end{array}$

By applying the fact that $t\leq\left\lceil\frac{n}{2}\right\rceil-2$ to all the terms on the right side except the last three terms, we see that 

\noindent$\begin{array}{rrl}
&n(n-1)\cdots(n-t+1)&<3\left(\left\lceil\frac{n}{2}\right\rceil-2+\left\lfloor\frac{n}{2}\right\rfloor-1\right)\left(\left\lceil\frac{n}{2}\right\rceil-2+\left\lfloor\frac{n}{2}\right\rfloor-2\right)\cdots \\ 
 &&\hskip.9cm\cdots\left(\left\lceil\frac{n}{2}\right\rceil-2+\left\lfloor\frac{n}{2}\right\rfloor-t+3\right)
\left(\left\lfloor\frac{n}{2}\right\rfloor+2\right)\left(\left\lfloor\frac{n}{2}\right\rfloor+1\right)\left\lfloor\frac{n}{2}\right\rfloor\\
&
&<3(n-3)(n-4)\cdots(n-t+1)\left(\left\lfloor\frac{n}{2}\right\rfloor+2\right)\left(\left\lfloor\frac{n}{2}\right\rfloor+1\right)\left\lfloor\frac{n}{2}\right\rfloor\\
\Longrightarrow&n(n-1)(n-2)														&<3\left(\left\lfloor\frac{n}{2}\right\rfloor+2\right)\left(\left\lfloor\frac{n}{2}\right\rfloor+1\right)\left\lfloor\frac{n}{2}\right\rfloor\\
								&																			&<\frac{3n}{2}\left(\frac{n}{2}+2\right)\left(\frac{n}{2}+1\right)\\
\Longrightarrow&5n^2-42n-8														&<0\\
\Longrightarrow&5n^2-42n-27														&<0\\
\Longrightarrow&(5n+3)(n-9)														&<0\\
\Longrightarrow&n&<9\textnormal{, a contradiction.}
\end{array}$

Therefore, $\sum\limits_{i=0}^{t}{\binom{n}{i}}
\geq3\binom{t+\left\lfloor\frac{n}{2}\right\rfloor-1}{t-1}+\binom{n}{t}\geq3\binom{t+\left\lfloor\frac{n}{2}\right\rfloor-1}{t-1}+3\binom{t+\left\lfloor\frac{n}{2}\right\rfloor-1}{t}\geq3\binom{t+\left\lfloor\frac{n}{2}\right\rfloor}{t}$.
\end{proof}
Now with the help of Lemma \ref{simplicialcommonngbr}, Corollary \ref{open}, Corollary \ref{i2l}, Lemma \ref{limp2l}, Lemma \ref{parteg}, Lemma \ref{q=1} and Lemma \ref{r>3s}, as mentioned already, we shall establish  an upper bound for the b-chromatic number of some powers of hypercubes which is even better than the one given in Theorem \ref{roughbound}.
\begin{thm}\label{newboundhypercube}
For odd $n\geq5$ and $\frac{n+1}{2}\leq p\leq n-2$, if $r=\sum\limits_{i=0}^{p-\frac{n-1}{2}}{\binom{n}{i}}$ and $s=\binom{p}{p-\frac{n-1}{2}}$, then $b(Q_n^p)\leq2^{n-1}+\left\lfloor\frac{r-s}{2}\right\rfloor$.
For even $n\geq6$ and $\frac{n}{2}+1\leq p\leq n-2$, if $r'=\sum\limits_{i=0}^{p-\frac{n}{2}}{\binom{n}{i}}+ \binom{n-1}{p-\frac{n}{2}}$ and $s'=\binom{p-1}{p-\frac{n}{2}}$, then $b(Q_n^p)\leq2^{n-1}+\left\lfloor\frac{r'-s'}{2}\right\rfloor$.
\end{thm}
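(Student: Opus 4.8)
The plan is to mimic the contradiction argument already used in Theorem~\ref{roughbound}, but to push it one step further by exploiting the exact count of common neighbors provided in Theorem~\ref{x-k} together with the strict inequality in Corollary~\ref{i2l}. Suppose first that $n$ is odd and that $b(Q_n^p)\geq 2^{n-1}+\left\lfloor\frac{r-s}{2}\right\rfloor+1$. By Lemma~\ref{limp2l} there is a clique $A$ in $Q_n^p$ of size $2\left(\left\lfloor\frac{r-s}{2}\right\rfloor+1\right)$ with $|C^p(A)|\geq 2^{n-1}-\left(\left\lfloor\frac{r-s}{2}\right\rfloor+1\right)$. Since $A$ is a clique in $Q_n^p$ every pair $x,y\in A$ satisfies $|x\triangle y|\leq p$, so Corollary~\ref{open} applies and gives $|C^p(\mathcal I_{|A|})|\geq |C^p(A)|$. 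Writing $m=2\left(\left\lfloor\frac{r-s}{2}\right\rfloor+1\right)$, the key point is that $m\geq r-s+1$, hence $|C^p(\mathcal I_m)|\leq |C^p(\mathcal I_{r-s+1})|$ because passing to a larger initial segment can only shrink the set of common neighbors (as observed inside the proof of Theorem~\ref{x-k}, $\mathcal I_t\subseteq\mathcal I_{t'}$ forces $C^p[\mathcal I_{t'}]\subseteq C^p[\mathcal I_t]$, and here $\mathcal I_{r-s+1}\subseteq C^p[\mathcal I_{r-s+1}]$ so the count of $C^p(\cdot)$ is also monotone once we are past the point where the initial segment is contained in its closure). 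Now Corollary~\ref{i2l} yields $|C^p(\mathcal I_{r-s+1})|<2^{n-1}-(r-2s+1)$. Chaining these inequalities produces $2^{n-1}-\left(\left\lfloor\frac{r-s}{2}\right\rfloor+1\right)\leq 2^{n-1}-(r-2s+1)-1$, i.e.\ $\left\lfloor\frac{r-s}{2}\right\rfloor\geq r-2s+1$, which should fail once $r\geq 3s$; this is exactly what Lemma~\ref{r>3s} supplies (after rewriting $q=p-\frac{n-1}{2}$ and $\left\lfloor\frac n2\right\rfloor=\frac{n-1}{2}$, noting $s=\binom{p}{q}=\binom{q+\frac{n-1}{2}}{q}$). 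One must separately dispose of the small cases $q\le 2$ where Lemma~\ref{r>3s} does not apply: $q=1$ is handled by Lemma~\ref{q=1}, and the remaining exceptional instance $n=7,p=5$ by Lemma~\ref{parteg}; $q=2$ cases should be checked directly and reduced to an explicit $r\geq 3s$ verification or a short ad hoc computation of $C^p[\mathcal I_{r-s+1}]$.

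The even case runs in complete parallel, with $r,s$ replaced by $r',s'$ and the clique size taken to be $2\left(\left\lfloor\frac{r'-s'}{2}\right\rfloor+1\right)\geq r'-s'+1$. Again Corollary~\ref{open} moves us to the initial segment, monotonicity moves us to $\mathcal I_{r'-s'+1}$, Corollary~\ref{i2l} gives the strict bound $|C^p(\mathcal I_{r'-s'+1})|<2^{n-1}-(r'-2s'+1)$, and the contradiction reduces to $r'\geq 3s'$. Here one rewrites $q=p-\frac n2$, so that $s'=\binom{p-1}{q}=\binom{q+\frac n2-1}{q}=\binom{q+\left\lfloor\frac n2\right\rfloor-1}{q}$, and $r'=\sum_{i=0}^{q}\binom ni+\binom{n-1}{q}$; since $r'\geq \sum_{i=0}^{q}\binom ni$ which by Lemma~\ref{r>3s} is $\geq 3\binom{q+\left\lfloor\frac n2\right\rfloor}{q}\geq 3\binom{q+\left\lfloor\frac n2\right\rfloor-1}{q}=3s'$, the needed inequality follows for $q\geq 3$, and the finitely many cases $q\in\{1,2\}$ are again dealt with by Lemma~\ref{q=1} and direct inspection of $Q_6^4$, $Q_8^5$ etc.

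The main obstacle I anticipate is not the chain of inequalities itself but the bookkeeping at the two boundaries: (i) verifying carefully that the monotonicity step is legitimate — that is, that $|C^p(\mathcal I_m)|\le |C^p(\mathcal I_{r-s+1})|$ holds even though $C^p(\cdot)$ (as opposed to $C^p[\cdot]$) is only well-behaved under the hypothesis that the segment lies inside its own closure, which one must confirm is the case for all relevant indices in the range $r-s+1\le m$; and (ii) pinning down precisely which small values of $q$ (equivalently, which small $p$ relative to $n$) escape Lemma~\ref{r>3s}, and checking each by hand. The parity split and the translation between the $(r,s)$ and $(q,\lfloor n/2\rfloor)$ notations is where sign/index errors are most likely, so I would set up the substitution $q=p-\lceil n/2\rceil+1$ once and for all at the start and carry it through both cases uniformly.
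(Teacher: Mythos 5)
Your proposal follows essentially the same route as the paper: assume $b(Q_n^p)\geq 2^{n-1}+\lfloor\frac{r-s}{2}\rfloor+1$, invoke Lemma \ref{limp2l}, pass to the initial segment via Corollary \ref{open}, use monotonicity of $|C^p(\mathcal I_t)|$ in $t$ (which is legitimate here precisely because, by Lemma \ref{simplicialcommonngbr}, all the relevant sets are initial segments), apply Corollary \ref{i2l} to force $r<3s$ (resp.\ $r'<3s'$), and contradict this with Lemma \ref{r>3s} for $q\geq3$ while disposing of $q=1$ via Lemma \ref{q=1} and of $n=7,\,p=5$ via Lemma \ref{parteg}. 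The only difference is that the paper actually carries out the $q=2$ verifications (showing $r<3s$ forces $n<9$ for odd $n$ and $r'<3s'$ forces $n<2$ for even $n$) and the even-$n$, $q=1$ check ($r'=2n>3s'=\tfrac{3n}{2}$), which you correctly identify as necessary but leave as deferred computations.
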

\begin{proof}
Let us start with $n$ being odd and $n\geq 5$. By using Lemma \ref{parteg} and Lemma \ref{q=1},  it is enough to assume that $n\geq9$ and $\frac{n+3}{2}\leq p\leq n-2$. 
Suppose $b(Q_n^p)\geq2^{n-1}+\left\lfloor\frac{r-s}{2}\right\rfloor+1$, then by using Lemma \ref{limp2l}, there exists a clique of size $2\left(\left\lfloor\frac{r-s}{2}\right\rfloor+1\right)$, say $A_{2\left(\left\lfloor\frac{r-s}{2}\right\rfloor+1\right)}$ such that $\left|C^p\left(A_{2\left(\left\lfloor\frac{r-s}{2}\right\rfloor+1\right)}\right)\right|\geq2^{n-1}-\left(\left\lfloor\frac{r-s}{2}\right\rfloor+1\right)$.
Since $2\left(\left\lfloor\frac{r-s}{2}\right\rfloor+1\right)\geq r-s+1$, $|C^p(\mathcal{I}_{r-s+1})|\geq\left|C^p\left(\mathcal{I}_{2\left(\left\lfloor\frac{r-s}{2}\right\rfloor+1\right)}\right)\right|$.
By combining these results with Corollary \ref{open} and Corollary \ref{i2l}, we have the following.

\noindent$\begin{array}{rl}
2^{n-1}-(r-2s+1)&>|C^p(\mathcal{I}_{(r-s+1)})|\\
&\geq2^{n-1}-\left(\left\lfloor\frac{r-s}{2}\right\rfloor+1\right)\\
&\geq2^{n-1}-\left(\frac{r-s}{2}+1\right)\\
\textnormal{Therefore}\ r&<3s
\end{array}$

For $p=\frac{n+3}{2}$, $r<3s$ will yield the following.

\noindent$\begin{array}{rrl}
&1+n+\frac{n(n-1)}{2}&<\frac{3\left(\frac{n+3}{2}\right)\left(\frac{n+1}{2}\right)}{2}\\
\Longrightarrow&n^2-8n-1&<0\\
\Longrightarrow&n^2-8n-9&<0\\
\Longrightarrow&(n+1)(n-9)&<0\\
\end{array}$

Thus $n<9$, a contradiction to the assumption that $n\geq9$.
Therefore,  $\frac{n+5}{2}=\frac{n-1}{2}+3\leq p\leq n-2$.

Next, let us consider $n$ to be even with $n\geq6$ and $\frac{n}{2}+1\leq p\leq n-2$. Suppose $b(Q_n^p)\geq2^{n-1}+\left\lfloor\frac{r'-s'}{2}\right\rfloor+1$, then as done in the odd case we can show that $r'<3s'$.
But for $n$ being even and $n\geq6$, if $p=\frac{n}{2}+1$, then $r'=2n$ which is greater than $3s'=\frac{3n}{2}$, a contradiction to $r'<3s'$.
Note that, when $n=6$, there is no $p$ such that $\frac{n}{2}+2\leq p\leq n-2$.
Also, when $n\geq8$ and $p=\frac{n}{2}+2$, $r'<3s'$ implies the following.

\noindent$\begin{array}{rrl}
&1+n+\frac{n(n-1)}{2}+\frac{(n-1)(n-2)}{2}&<\frac{3\left(\frac{n}{2}+1\right)\left(\frac{n}{2}\right)}{2}\\
\Longrightarrow&5n^2-14n+16&<0\\
\Longrightarrow&5n^2-14n+8&<0\\
\Longrightarrow&(5n-4)(n-2)&<0\\
\end{array}$

Thus $n<2$, a contradiction to the assumption that $n\geq8$.
So when $n$ is even, $n\geq 8$ and $\frac{n}{2}+3\leq p\leq n-2$.
Also when $n=8$, there is no $p$ such that $\frac{n}{2}+3\leq p\leq n-2$.
Therefore, for $n$ being even, $n\geq10$ and $\frac{n}{2}+3\leq p\leq n-2$.

Let us introduce a new variable $q$ in the following way. 
Let $q=p-\left\lfloor\frac{n}{2}\right\rfloor$. Then $r, r', s, s'$ can be rewritten as $r=\sum\limits_{i=0}^{q}{\binom{n}{i}}$,  $r'=\sum\limits_{i=0}^{q}{\binom{n}{i}}+ \binom{n-1}{q}$, $s=\binom{q+\left\lfloor\frac{n}{2}\right\rfloor}{q}$ and $s'=\binom{q+\left\lfloor\frac{n}{2}\right\rfloor-1}{q}$. Clearly $r'\geq r$ and $s\geq s'$.
Since $n\geq9$ and $q\geq3$, by using Lemma \ref{r>3s}, we get that $r\geq3s$. This will also imply that  $r'\geq3s'$, a contradiction to both $r<3s$ and $r'<3s'$ in the odd and even case respectively. Hence, we have the desired upper bounds.
\end{proof}
\section{b-coloring of Hamming graphs}
Finally, let us conclude this paper by considering the b-chromatic number of some powers of Hamming graph which is a generalization of Hypercube.
\begin{thm}\label{Hnqp}
   Let $n, q,$ and $ p$ be positive integers. For $2 \leq q\leq n-1$ and $\left\lfloor\frac{n(q-1)}{q}\right \rfloor \leq p\leq n-1$,
$b(H_{n,q}^p) \geq q^{n-1}$.
 \end{thm}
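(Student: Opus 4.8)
The plan is to exhibit an explicit b-coloring of $H_{n,q}^p$ that uses exactly $q^{n-1}$ colors, which gives the bound at once. The coloring is the one induced by the cosets of the \emph{repetition subgroup} $R=\{(a,a,\dots,a):a\in\mathbb{Z}_q\}\leq\mathbb{Z}_q^n$: to each vertex $v\in\mathbb{Z}_q^n$ assign the colour corresponding to its coset $v+R$. Since $|R|=q$, this produces exactly $q^n/q=q^{n-1}$ non-empty colour classes, so precisely $q^{n-1}$ colours are used.

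First I would check that this colouring is proper in $H_{n,q}^p$. Two distinct vertices in the same coset have the form $v+(a,\dots,a)$ and $v+(b,\dots,b)$ with $a\neq b$, and these differ in every one of the $n$ coordinates, so their Hamming distance in $H_{n,q}$ is $n$. Since $p\leq n-1<n$, such vertices are non-adjacent in $H_{n,q}^p$; hence each coset is an independent set and the colouring is proper.

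The heart of the argument is to show that \emph{every} vertex is a colour-dominating vertex, i.e. for any $v$ and any coset $y+R\neq v+R$ there is $w\in y+R$ with $d_{H_{n,q}}(v,w)\leq p$. Write $w=y+(a,\dots,a)$ and $z=v-y\in\mathbb{Z}_q^n$; then $d_{H_{n,q}}(v,w)=n-|\{i:z_i=a\}|$, so it suffices to choose $a\in\mathbb{Z}_q$ maximizing the number of coordinates of $z$ equal to $a$. Since $z$ has $n$ coordinates taking values in the $q$-element set $\mathbb{Z}_q$, the pigeonhole principle yields such an $a$ with $|\{i:z_i=a\}|\geq\lceil n/q\rceil$, whence $d_{H_{n,q}}(v,w)\leq n-\lceil n/q\rceil$. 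A short case check ($q\mid n$ versus $q\nmid n$) gives the identity $n-\lceil n/q\rceil=\lfloor n(q-1)/q\rfloor$, which is $\leq p$ by hypothesis; so $v$ and $w$ are adjacent in $H_{n,q}^p$ (and $w\neq v$ since the cosets differ). Thus $v$ has a neighbour in every other colour class, so $v$ is colour-dominating, and in particular every one of the $q^{n-1}$ classes has a colour-dominating vertex. Hence the colouring is a b-colouring with $q^{n-1}$ colours, giving $b(H_{n,q}^p)\geq q^{n-1}$.

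The argument is short and essentially self-contained; the only genuine insight is picking the colouring by cosets of the repetition subgroup, and the only (minor) computation is the identity $n-\lceil n/q\rceil=\lfloor n(q-1)/q\rfloor$, which is precisely what makes the stated range of $p$ the right one — the lower endpoint $\lfloor n(q-1)/q\rfloor$ is exactly the pigeonhole bound, and the upper endpoint $n-1$ is exactly what properness needs. No earlier results from the paper are required, although the underlying idea — controlling the common neighbourhood of a large clique — is the same one driving the hypercube sections.
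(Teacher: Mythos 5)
Your proposal is correct and follows essentially the same route as the paper: colour by the cosets of the repetition subgroup $\{(a,a,\dots,a):a\in\mathbb{Z}_q\}$, use $p\le n-1$ for properness, and use a double count of coordinate agreements for dominance. The only difference is presentational — you apply the pigeonhole principle directly to find an $a$ with at least $\lceil n/q\rceil$ agreements and then use the identity $n-\lceil n/q\rceil=\lfloor n(q-1)/q\rfloor$, whereas the paper runs the same count in contrapositive form to derive $p<\lfloor n(q-1)/q\rfloor$; your version is, if anything, slightly cleaner.
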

\begin{proof}
Let us consider the subset of the vertices of $\mathbb{Z}_q^n$ given by $\mathcal{S}=\{(a,a,\ldots, a): a\in \mathbb{Z}_q\}$.
Clearly, $\mathcal{S}$ forms a subgroup in $\mathbb{Z}_q^n$ under the addition modulo $q$.
For $x=(x_1,x_2,\ldots, x_n)\in \mathbb{Z}_q^n$, let  $x\mathcal{S}=\{(x_1+a,x_2+a,\ldots,x_n+a): a\in\mathbb{Z}_q\}$ denote the coset of $\mathcal{S}$ with respect to $x$. Note that addition between two elements of $\mathbb{Z}_q$ will always mean addition modulo $q$. 
There are $|\mathbb{Z}_q^n|/|\mathcal{S}|=q^n/q=q^{n-1}$ cosets of $\mathcal{S}$, say $\mathcal{S}_1, \mathcal{S}_2, \ldots, \mathcal{S}_{q^{n-1}}$.
For $1\leq t\leq q^{n-1}$, let us define a coloring $c$ for $H_{n,q}^p$ by $c(v)=t$ if $v\in \mathcal{S}_t$.
Since the distance between any two vertices in $\mathcal{S}$ is $n$ in $H_{n,q}$ and $p\leq n-1$, $\mathcal{S}$ forms an independent set in $H_{n,q}^p$ and hence the coloring $c$ 
is proper.
So, the only thing that remain to show is that every color class of $c$ has a color dominating vertices.
Let us start by considering any two distinct color classes in $H_{n,q}^p$, say $\mathcal{S}_1$ and $\mathcal{S}_2$.
Let $x=(x_1,x_2,\ldots, x_n)\in\mathcal{S}_1$ and $y=(y_1,y_2,\ldots,y_n)\in\mathcal{S}_2$.

\noindent \textbf{Claim}: $x$ is adjacent to some vertex in $\mathcal{S}_2=y\mathcal{S}$.

Suppose $x$ is non-adjacent to any vertex in $y\mathcal{S}$, for each $a\in\mathbb{Z}_q$ there exists some $k_a\geq p+1$ such that $x_i\neq (y_i+a)(\mathrm{mod}~ q)$ for $k_a$ values of $i$, where $i\in [n]$. 
Note that the $k_a$ number of $i's$ need not be the same for two different elements in $\mathbb{Z}_q$.
For $1\leq i\leq n$ and $a\in \mathbb{Z}_q$, let $za=(x_1-(y_1+a),x_2-(y_2+a),\ldots,x_n-(y_n+a))$.
Clearly by using our assumption, for each $a\in \mathbb{Z}_q$, $k_a$ will be the number of non-zero entries in $za$, $k_a\geq p+1$ and $n-k_a\leq n-(p+1)$.
Note that the value $0$ occurs in at most $n-(p+1)$ co-ordinates of a vertex  in $z\mathcal{S}$. Hence the number of co-ordinates having $0$ in $z\mathcal{S}$ is at most $q(n-(p+1))$.
Also one can observe that, for every $i\in[n]$, since the inverse of an element is unique, there exists a unique $b\in \mathbb{Z}_q$ such that $x_i= (y_i+b)(\mathrm{mod}~ q)$. Thus for each co-ordinate the value $0$ occurs exactly once for some vertex $zb$ in $z\mathcal{S}$ and hence the number of co-ordinates having $0$ in $z\mathcal{S}$ is $n$. Therefore
$$n\leq(n-(p+1))q  \Longrightarrow pq+q\leq n(q-1)$$
$$\Longrightarrow p\leq \frac{n(q-1)}{q}-1 \Longrightarrow p< \left\lfloor\frac{n(q-1)}{q}\right\rfloor, \mathrm{~ a ~contradiction}.$$
Thus every vertex in every color class becomes a color dominating vertex.
Hence the coloring $c$ defined by the cosets of $\mathcal{S}$ is a b-coloring using $q^{n-1}$ colors and $b(H_{n,q}^p)\geq q^{n-1}$.
\end{proof}

Corollary \ref{Hnqn-1} is an immediate consequence of Theorem \ref{Hnqp}.
\begin{cor}\label{Hnqn-1}
  Let $q, n, k$ be positive integers and $q\geq2$. Then $b(H_{n,q}^{n-1}) \geq q^{n-1}$.
 \end{cor}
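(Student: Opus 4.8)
The plan is to derive Corollary \ref{Hnqn-1} directly from Theorem \ref{Hnqp} by checking that the hypotheses of the theorem are satisfied when $p=n-1$, or else to handle the few boundary cases by hand. First I would dispose of the trivial small cases: if $n=1$, then $H_{1,q}^{0}=H_{1,q}$ is the complete graph $K_q$, so $b(K_q)=q=q^{0}$ is not quite the claim; more carefully, for $n=1$ one has $q^{n-1}=q^{0}=1$ and any graph has $b(G)\geq 1$, so the statement holds vacuously. Thus the interesting range is $n\geq 2$.

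For $n\geq 2$, the key observation is that $p=n-1$ always satisfies the upper constraint $p\leq n-1$ of Theorem \ref{Hnqp}, so I only need to verify the lower constraint $\left\lfloor\frac{n(q-1)}{q}\right\rfloor\leq n-1$ together with the requirement $2\leq q\leq n-1$. The inequality $\left\lfloor\frac{n(q-1)}{q}\right\rfloor\leq n-1$ is immediate since $\frac{n(q-1)}{q}=n-\frac{n}{q}\leq n-1$ whenever $q\leq n$, and in particular whenever $q\leq n-1$; hence the floor is at most $n-1$. So for $2\leq q\leq n-1$, Theorem \ref{Hnqp} applies with $p=n-1$ and yields $b(H_{n,q}^{n-1})\geq q^{n-1}$ directly.

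The remaining case to address is $q\geq n$ (equivalently $q> n-1$), which is not covered by Theorem \ref{Hnqp}. Here I would argue separately: when $q\geq n$, the coset construction of Theorem \ref{Hnqp} still makes sense — the subgroup $\mathcal{S}=\{(a,a,\ldots,a):a\in\mathbb{Z}_q\}$ has size $q$, its $q^{n-1}$ cosets partition $\mathbb{Z}_q^n$, and since any two distinct vertices of $\mathcal{S}$ differ in all $n$ coordinates, $\mathcal{S}$ is independent in $H_{n,q}^{n-1}$. For the color-dominating property one repeats the counting argument of Theorem \ref{Hnqp} with $p=n-1$: the value $0$ can occur in at most $n-(p+1)=0$ coordinates of any vertex of $z\mathcal{S}$, yet across all $q$ vertices of $z\mathcal{S}$ each of the $n$ coordinates takes the value $0$ exactly once, forcing $n\leq 0$, a contradiction, so $x$ is adjacent to some vertex of every other coset. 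Hence every vertex is color-dominating and $b(H_{n,q}^{n-1})\geq q^{n-1}$ in this range too.

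I expect the main (minor) obstacle to be purely bookkeeping: making sure the degenerate cases $n=1$ and $q=n$ are either genuinely covered or trivially true, and confirming that the counting argument in Theorem \ref{Hnqp} goes through unchanged at the extreme $p=n-1$ even when $q\geq n$, since that regime falls outside the theorem's stated hypotheses. Beyond that, the corollary is essentially a specialization, so no new ideas are needed — it suffices to note that $p=n-1\leq n-1$ trivially and $\left\lfloor n(q-1)/q\right\rfloor=n-\lceil n/q\rceil\leq n-1$, invoking Theorem \ref{Hnqp}. (Incidentally the variable $k$ in the statement of Corollary \ref{Hnqn-1} appears to be a leftover and plays no role in the proof.)
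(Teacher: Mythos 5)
Your proof is correct, and it is organized differently from the paper's. The paper does not invoke Theorem \ref{Hnqp} as a black box at all: it repeats the coset construction and then proves the domination property for \emph{all} $q\geq 2$ in one stroke, by noting that for any $x$ and any coset $y\mathcal{S}$ there is a (unique) $b$ with $x_1=y_1+b$, so $x$ and $yb$ agree in at least one coordinate and hence are at distance at most $n-1$, i.e.\ adjacent in $H_{n,q}^{n-1}$. You instead split into the range $2\leq q\leq n-1$, where you correctly verify $\left\lfloor n(q-1)/q\right\rfloor\leq n-1$ and cite the theorem, and the range $q\geq n$, where you rightly observe the theorem's hypotheses fail and re-run its counting argument with $p=n-1$ (obtaining $n\leq 0$, a contradiction). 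Both routes are valid and rest on the same coset coloring; the paper's uniform one-line adjacency observation is cleaner and avoids the case split, while your version has the merit of making explicit that the corollary's hypotheses ($q\geq 2$ arbitrary relative to $n$) genuinely exceed those of Theorem \ref{Hnqp}, so some supplementary argument is unavoidable — a point the paper glosses over by silently reproving everything. Your handling of the degenerate case $n=1$ and your remark that the variable $k$ in the statement is vestigial are both apt.
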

 \begin{proof}
 Let us define $c$ exactly in the same way as we have done in Theorem \ref{Hnqp}. This $c$ will be proper.  To show that every color class of $c$ has a color dominating vertices, let us consider any two distinct color classes in $H_{n,q}^p$, say $\mathcal{S}_1$ and $\mathcal{S}_2$ and $x=(x_1,x_2,\ldots, x_n)\in\mathcal{S}_1$ and $y=(y_1,y_2,\ldots,y_n)\in\mathcal{S}_2=y\mathcal{S}_2$.  As observed in Theorem \ref{Hnqp}, for every $i\in[n]$, since the inverse of an element is unique, there exists a unique $b\in \mathbb{Z}_q$ such that $x_i= (y_i+b)(\mathrm{mod}~ q)$ and hence $x$ and $yb$ differ in at most $n-1$ coordinates. Since the power of the hamming graph is $n-1$, the vertex $x$ will be adjacent to the  vertex $yb$ in $\mathcal{S}_2$. Thus every vertex in every color class becomes a color dominating vertex.
Hence the coloring $c$ defined by the cosets of $\mathcal{S}$ is a b-coloring using $q^{n-1}$ colors and $b(H_{n,q}^{n-1})\geq q^{n-1}$.
 \end{proof}

\subsection*{Acknowledgment}
\small For the first author, this research was supported by Post Doctoral Fellowship, Indian Institute of Technology, Palakkad. And for the second author, this research was supported by SERB DST, Government of India, File no: EMR/2016/007339.
Also, for the third author, this research was supported by the UGC-Basic Scientific Research, Government of India, Student id: gokulnath.res@pondiuni.edu.in.
\bibliographystyle{ams}
\bibliography{ref}  
\end{titlepage}
\end{document}